\newfont{\cyrr}{wncyr10}
\newcommand{\thmref}[1]{Theorem~\ref{#1}}
\newtheorem{thm}{Theorem}
\newtheorem{lem}[thm]{Lemma}
\newtheorem{prop}[thm]{Proposition}
\newtheorem{rmk}{Remark}[section]
\newcommand{\propref}[1]{Proposition~\ref{#1}}
\newcommand{\lemref}[1]{Lemma~\ref{#1}}
\newcommand{\C}{{\mathbb C}}
\newcommand{\DD}{\sideset{}{'}\sum_{X < d \leq 2X \atop{(d, 2q)=1}}}
\newcommand{\bb}{\frac{X}{a^2} < b \leq \frac{2X}{a^2}}
\newcommand{\pp}{\prod_{L^2 \leq p \leq L^4}}
\newcommand{\cchi}{\chi_{8d}}
\newcommand{\poc}{\psi \otimes \chi_{8d}}
\newcommand{\dsi}{d_{\psi}}
\begin{document}
\title[Extreme Values]{On extreme values of quadratic twists of Dirichlet-type $L$-functions}

\author{Sanoli Gun and Rashi Lunia}

\address{Sanoli Gun and Rashi Lunia \\ \newline
The Institute of Mathematical Sciences, A CI of Homi Bhabha National Institute, 
CIT Campus, Taramani, Chennai 600 113, India.}
	
\email{sanoli@imsc.res.in}
\email{rashisl@imsc.res.in}
	
\subjclass[2010]{11M06}

\keywords{Extremal values, Dirichlet-type functions}

\begin{abstract}
In a recent work \cite{GKS}, it has been shown that $L$-functions
associated with arbitrary non-zero cusp forms take large
values at the central critical point.
The goal of this note is to derive analogous results for 
twists of Dirichlet-type functions. More precisely, 
for an odd integer $q >1$, let $F$ be a non-zero 
$\C$-linear combination of primitive, complex, even Dirichlet characters 
of conductor $q$.
We show that for any $\epsilon>0$ and sufficiently large $X$,
there are $\gg X^{1-\epsilon}$ fundamental discriminants $8d$ 
with $X < d \leq 2X$ and ${(d, 2q)=1}$ such that 
${|L(1/2, F \otimes \cchi)| }$ is large. 
\end{abstract}	

\maketitle

\section{Introduction}

For a Dirichlet character $\chi$ modulo $q$ and a complex number $s$ with $\Re(s) >1$, 
the $L$-function associated with it is defined by
$$
L(s, \chi) = \sum_{n=1}^{\infty}\frac{\chi(n)}{n^s}.
$$
This $L$-function has a meromorphic continuation to the entire complex plane and satisfies a 
functional equation relating its value at $s$ with its value at $1-s$.
Consequently, values of these $L$-functions at the central 
point $s=1/2$ are interesting as well as mysterious.

Jutila \cite{MJ} obtained asymptotic formulae for 
first and second moments of Dirichlet $L$-functions attached to 
real characters. Application of Cauchy-Schwartz inequality ensures 
that there are $\gg \frac{X}{\log X}$ real characters $\chi$ of 
conductor at most $X$ for which $L(1/2, \chi) \neq 0$.
This result was improved by Soundararajan in \cite{KS00} 
to show that $L(1/2, \chi) \neq 0$ for at least $87.5 \%$ of the 
real characters $\chi$.

In his thesis, Soundararajan \cite{KST} showed that for an even, non-quadratic,
primitive Dirichlet character $\psi$ of odd conductor $q$, 
$L(1/2, \psi \otimes \cchi) \neq 0$ for at least $20\%$
of the square-free integers $d \geq 0$ with $(d, 2q)=1$ and indicated
that it can be improved to $33\%$.
In a novel article \cite{KS}, Soundararajan introduced the so-called
resonance method 
to study large values of Dirichlet $L$-functions at $1/2$.
This method was used by the first author along with Kohnen
and Soundararajan \cite{GKS} to obtain large values 
$L$-functions associated with arbitrary non-zero cusp forms
at central critical point. The goal of this article is to extend
this investigation to the study of large values of  
quadratic  twists of $\C$-linear combination of Dirichlet $L$-functions.
More precisely, we prove the following theorem.
 
\begin{thm}\label{mainthm}
Let $q > 1$ be an odd integer and let
$$
F= \sideset{}{^*}\sum_{\psi ({\rm mod} q)} c_{\psi} \psi
$$
be a non-zero linear combination of even, non-quadratic, primitive Dirichlet 
characters of conductor $q$ with coefficients $c_\psi \in \C$. 
For any $\epsilon >0$ and sufficiently large $X$, there exists 
$\gg X^{1- \epsilon}$ fundamental discriminants $8d$ 
with $X < d \leq 2X$ and $(d, 2q)=1$ such that 
$$
|L(1/2, F \otimes \cchi)|
~>~
\exp\left(\frac{1}{81}\sqrt{\frac{\log X}{\log\log X}}\right).
$$
\end{thm}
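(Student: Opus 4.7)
The plan is to adapt Soundararajan's resonance method \cite{KS}, extended in \cite{GKS} to linear combinations of $L$-functions, to the family of quadratic twists $\{L(1/2, F \otimes \cchi)\}$. First I would record the approximate functional equation for each factor $L(s, \psi \otimes \cchi)$: since the analytic conductor is $\asymp qd$, summing with the weights $c_\psi$ yields
$$
L(1/2, F \otimes \cchi) \;\approx\; \sum_{n \geq 1} \frac{a_F(n)\, \cchi(n)}{\sqrt{n}}\, V\!\left(\frac{n}{\sqrt{qd}}\right),
$$
where $a_F(n) = \sum_{\psi}^{*} c_\psi \psi(n)$ and $V$ is a standard smooth rapidly-decaying cutoff. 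I would then introduce a resonator of the form
$$
R(d) \;=\; \sum_{\substack{\ell \leq N \\ (\ell, 2q)=1}} r(\ell)\, \cchi(\ell),
$$
with $r$ multiplicative, supported on squarefree integers whose prime factors $p$ lie in a dyadic range $[L^2, L^4]$ with $\log L \asymp \sqrt{\log X / \log\log X}$, and $r(p) = L/(\sqrt{p}\log p)$. The resonator length $N$ is taken as a small power of $X$, say $N = X^{\delta}$ for a small $\delta=\delta(\epsilon) > 0$.

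The heart of the argument is the evaluation of the two moments
$$
M_0 \;=\; \sideset{}{'}\sum_{\substack{X < d \leq 2X \\ (d, 2q)=1}} |R(d)|^2,
\qquad
M_1 \;=\; \sideset{}{'}\sum_{\substack{X < d \leq 2X \\ (d, 2q)=1}} |R(d)|^2\, L(1/2, F \otimes \cchi).
$$
For $M_0$, opening the square and applying Poisson summation to $\sum_d \cchi(\ell m)$ isolates the diagonal $\ell = m$, producing $M_0 \sim c_q\, X \prod_p (1 + r(p)^2)$. For $M_1$, substituting the approximate functional equation and then applying Poisson summation to $\sum_d \cchi(\ell m n)$ yields a main term from the diagonal $\ell m n = \square$; thanks to the multiplicativity of $r$ and $a_F$, this main term factors as an Euler product. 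An optimization of the prime values of $r$ gives
$$
\frac{M_1}{M_0} \;\gg\; \exp\!\left(\frac{1}{81}\sqrt{\frac{\log X}{\log\log X}}\right).
$$
To then count many discriminants with a large $L$-value, one applies a standard Cauchy--Schwarz / pigeonhole argument to the exceptional set $\mathcal{S} = \{d : |L(1/2, F \otimes \cchi)| \leq V\}$, combined with the trivial bound $|R(d)| \ll N^{1/2 + o(1)}$ and a second-moment estimate for $L(1/2, F \otimes \cchi)$ in the family, which together force $|\mathcal{S}^c| \gg X^{1-\epsilon}$.

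The main obstacle will be the rigorous control of the off-diagonal part of $M_1$. Concretely, one must bound
$\sideset{}{'}\sum_{X<d\leq 2X,\, (d,2q)=1} \cchi(\ell m n)$
for non-square $\ell m n$ uniformly in $\ell, m \leq N$ and $n \lesssim \sqrt{qX}$; Poisson summation in $d$ converts this to a dual sum of effective length $\asymp \ell m n / X$, which must be shown to be dominated by the main term. This is where the constraint $N = X^\delta$ with $\delta$ small becomes essential, and fixing the admissible $\delta$ is what ultimately limits the exponent $1/81$. A secondary difficulty is to verify that the Euler product appearing in the main term of $M_1$ does not vanish: the hypothesis that each $\psi$ is non-quadratic ensures $\psi^2$ is a nontrivial character modulo $q$ with $L(1, \psi^2) \neq 0$, so the local factors are nondegenerate, and combined with $F \not\equiv 0$ this guarantees a genuinely large ratio $M_1/M_0$.
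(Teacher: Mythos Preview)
Your proposal has a genuine gap at the resonance step. With a real resonator $R(d)=\sum_{\ell\le N}r(\ell)\cchi(\ell)$ and the first moment $M_1=\DD |R(d)|^2\, L(1/2,F\otimes\cchi)$, the diagonal $\ell m n=\square$ contributes, for each $\psi$ occurring in $F$, an Euler product
\[
\prod_{p}\Bigl(1+\frac{2r(p)\psi(p)}{\sqrt p}+r(p)^2\Bigr)
=\prod_p(1+r(p)^2)\,\exp\Bigl(\sum_p\frac{2r(p)\psi(p)}{\sqrt p}+O\Bigl(\frac{L}{\log^3 L}\Bigr)\Bigr).
\]
Because every $\psi$ is non-quadratic, hence nontrivial, one has $\sum_{L^2\le p\le L^4}\psi(p)/(p\log p)=o(1/\log L)$, so the exponent above is only $o(L/\log L)$. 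Thus $M_1/M_0=\exp\bigl(o(\sqrt{\log X/\log\log X})\bigr)$; your ``optimization'' yields no gain. The observation you make in your last paragraph actually works against you: nontriviality of $\psi$ is precisely what kills the prime sum, so a real, untuned resonator cannot align with complex characters. (Separately, your approximate functional equation omits the dual sum, whose root number $\epsilon_\psi(d)=\psi(8d)\bigl(\tfrac{8d}{q}\bigr)\tau(\psi)/\sqrt q$ varies with both $\psi$ and $d$.)

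The paper repairs this in two essential ways. First, it fixes $\psi_0$ with $c_{\psi_0}\ne0$ and \emph{tunes} the resonator to it, taking $R(d)=\sum_{n\le N} r(n)\psi_0(n)\cchi(n)$. Second, it works not with $L(1/2,F\otimes\cchi)$ but with the weighted \emph{squares} $\DD|L(1/2,\psi\otimes\cchi)|^2|R(d)|^2$ for each $\psi$ separately; here the coefficients $d_\psi(n)=\sum_{ab=n}\psi(a)\overline{\psi}(b)$ are real, and the relevant exponent becomes $\sum_p r(p)d_{\psi_0}(p)d_\psi(p)/\sqrt p$, which is $(\tfrac12+o(1))L/\log L$ when $\psi=\psi_0$ but $o(L/\log L)$ otherwise. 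The transfer to $F$ is then pointwise, via
\[
2|L(1/2,F\otimes\cchi)|^2 \ge |L(1/2,\psi_0\otimes\cchi)|^2 - A\sideset{}{^*}\sum_{\psi\ne\psi_0}|L(1/2,\psi\otimes\cchi)|^2.
\]
Finally, the count $\gg X^{1-\epsilon}$ is obtained not from a second moment of $L$ and a pointwise bound on $R$, but via H\"older's inequality combined with the fourth-moment estimate $\DD|L(1/2,\psi_0\otimes\cchi)|^4\ll X^{1+\epsilon}$ (proved separately, following Heath-Brown) and the sixth-moment estimate $\DD|R(d)|^6\ll X^{1+o(1)}$.
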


\begin{rmk}\label{mainrmk1}
\thmref{mainthm} also holds when $F$ is a non-zero $\C$-linear combination of odd, 
non-quadratic, primitive Dirichlet characters of odd conductor $q >1$. 
\end{rmk}

This article is arranged as follows. In \S2, we recall some preliminaries and
state the results leading to the proof of the main theorem.  In \S3, \S4 and \S5, we prove 
the auxiliary results. Finally in \S6, we prove \thmref{mainthm}.

\smallskip

\section{Preliminaries}
Throughout the article, $p$ and $\ell$ will denote primes.
For a fundamental discriminant $D$, let $\chi_D(\cdot)=\Big(\frac{D}{\cdot}\Big)$ 
be the real Dirichlet character with conductor $|D|$.
Let $q >1$ be an odd integer and $\psi$ a non-quadratic, 
primitive Dirichlet character modulo $q$. 
For simplicity, we shall assume $\psi(-1)=1$. 
If $(q,d)=1$, the character $\psi \otimes \cchi$ defined as 
${\psi \otimes \cchi(\cdot)=\psi(\cdot)\cchi(\cdot)}$,  
is primitive with conductor $8dq$.
The Dirichlet $L$-function attached to $\psi \otimes \cchi$
is defined as 
$$
L(s, \psi \otimes \cchi)
=\sum_{n=1}^{\infty}\frac{\psi(n)\cchi(n)}{n^s},
\qquad \Re(s)>1.
$$
Since $\psi$ is non-quadratic, $\psi \otimes \cchi$ 
is non-principal and hence $L(s, \psi \otimes \cchi)$ 
has an analytic continuation to the entire complex plane.  
It satisfies the functional equation
$$
\Lambda(s, \psi \otimes \cchi)
=
\epsilon(d)\Lambda(1-s, \overline{\psi} \otimes \cchi),
$$
where
\begin{equation}\label{fun}
\Lambda(s, \psi \otimes \cchi)
~=~
\pi^{-s/2}(8qd)^{s/2}\Gamma(s/2)L(s, \psi \otimes \cchi),
\phantom{m}
\epsilon(d)= \psi(8d)\Big(\frac{8d}{q}\Big)
\frac{\tau(\psi)}{\sqrt{q}}
\end{equation}
and $\tau(\psi)= \sum_{a=1}^q \psi(a)e^{2\pi ia/q}$ is the Gauss sum associated to
$\psi$ (see \cite{HD} and also eq 1.4 and page 13 of \cite{KST}).
For an odd positive square-free integer $d$, we have 
(see Lemma 2.2 and eq 1.7 of \cite{KST})
\begin{equation}\label{approx}
|L(1/2, \poc)|^2=2\sum_{n=1}^{\infty}\Big(\frac{8d}{n}\Big)
\frac{d_\psi(n)}{\sqrt{n}}
V\Big(\frac{\pi n}{8dq}\Big)
\end{equation}
where 
$d_\psi(n)= \sum_{ab=n}\psi(a)\overline{\psi(b)}$
and for real numbers $c>1/2$ and $x>0$,
\begin{equation}\label{Vx}
V(x)
~=~ 
\frac{1}{2\pi i}\int_{(c)}
\Big(\frac{\Gamma(s/2+1/4)}{\Gamma(1/4)}\Big)^2x^{-s}\frac{ds}{s}
\phantom{m}\text{and}\phantom{m}
V(0) =1.
\end{equation}
We note that (see Lemma 2.1 of \cite{KST}) $V(x)$ is a real-valued smooth function
on $[0,\infty)$ and for any $\epsilon>0$, it satisfies
\begin{equation}\label{vbound}
V(x)=1+O(x^{1/2-\epsilon}) \qquad {\rm{and}} \qquad 
V(x)\ll e^{-x/2}.
\end{equation}
From now onwards, $\sum'$ denotes that the sum is over square-free integers.
We record the following lemma on sums of quadratic Dirichlet 
characters.
\begin{lem}\label{charsum}
Let $u \leq X$ be an odd natural number. If $u$ is a square, then
\begin{equation*}
\DD \chi_{8d}(u)
~=~
\frac{X}{\zeta(2)}\prod_{p|2uq}\Big(\frac{p}{p+1}\Big)
~+~
O(u^{\frac{1}{4}}X^{\frac12}),
\end{equation*}
while if $u$ is not a square then as $X$ tends to infinity,
\begin{equation*}
\DD \chi_{8d}(u) 
= O(u^{\frac14}X^{\frac12}(\log X)^{\frac34}).
\end{equation*}
Here the constants in $O$ are dependent on $q$.
\end{lem}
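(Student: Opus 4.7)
The plan is to treat the two cases separately via the Mobius unfolding of the squarefree indicator, $\mu^2(d)=\sum_{a^2\mid d}\mu(a)$.

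Suppose first that $u=v^2$ with $v$ odd. Then $\chi_{8d}(v^2)=\chi_{8d}(v)^2$ is $1$ if $(d,v)=1$ and $0$ otherwise, so the sum equals the count of squarefree $d\in(X,2X]$ with $(d,2vq)=1$. Unfolding the squarefree indicator and interchanging summations yields
\[
\sum_{\substack{a\geq 1\\(a,2vq)=1}}\mu(a)\,\#\{X/a^2<b\leq 2X/a^2:\ (b,2vq)=1\}.
\]
I would evaluate the inner count as $\tfrac{X}{a^2}\cdot\tfrac{\phi(2vq)}{2vq}+O(2^{\omega(2vq)})$, truncate the outer sum at $a\leq Y$, bound the main-term tail trivially by $X/Y$, and complete the truncated sum via $\sum_{(a,2vq)=1}\mu(a)/a^2=\zeta(2)^{-1}\prod_{p\mid 2vq}(1-p^{-2})^{-1}$. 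Choosing $Y=X^{1/2}$ extracts the main term $\frac{X}{\zeta(2)}\prod_{p\mid 2vq}\frac{p}{p+1}$ with error $O_q(X^{1/2}\cdot 2^{\omega(v)})$, which is absorbed into $O(u^{1/4}X^{1/2})$ because $2^{\omega(v)}\ll v^{\epsilon}\ll u^{\epsilon/2}$ and the $O$-constant is allowed to depend on $q$.

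For the non-square case, the same identity combined with $\chi_{8a^2b}(u)=\bigl(\tfrac{a}{u}\bigr)^2\chi_{8b}(u)$, which vanishes unless $(a,u)=1$, gives
\[
\DD \chi_{8d}(u)=\sum_{\substack{a\leq\sqrt{2X}\\(a,2qu)=1}}\mu(a)\sum_{\substack{\bb\\(b,2q)=1}}\chi_{8b}(u).
\]
Since $u$ is not a square, $b\mapsto\chi_{8b}(u)=\bigl(\tfrac{8}{u}\bigr)\bigl(\tfrac{b}{u}\bigr)$ is a non-principal Dirichlet character modulo $u$, so Polya--Vinogradov bounds the inner sum by $O(u^{1/2}\log u)$. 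The crucial step is to split the outer sum at a threshold $A$: apply Polya--Vinogradov for $a\leq A$ and the trivial bound $X/a^2$ for $a>A$, producing
\[
\DD\chi_{8d}(u)\ll A\cdot u^{1/2}\log u+\frac{X}{A},
\]
which at the balanced choice $A\asymp X^{1/2}u^{-1/4}(\log u)^{-1/2}$ delivers the claimed bound $O(u^{1/4}X^{1/2}(\log X)^{3/4})$ (using $\log u\leq\log X$, which absorbs the slightly generous log exponent).

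The main obstacle lies in the non-square case: the $u^{1/4}$ exponent cannot be reached by applying Polya--Vinogradov to the unfiltered sum, which only yields $u^{1/2}$. The saving to $u^{1/4}$ is obtained solely through the Mobius splitting, which trades the character-sum bound for small $a$ against the trivial tail estimate for large $a$. Everything else reduces to a routine arithmetic-progression count and standard character-sum manipulations, and the dependence on $q$ enters only through the divisor-type bounds $2^{\omega(2vq)}\ll_q 1$.
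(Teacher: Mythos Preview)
Your proposal is correct and follows essentially the same approach as the paper: M\"obius-detect the squarefree condition, count integers in progressions for the square case, and for the non-square case apply P\'olya--Vinogradov to the inner character sum and balance against the trivial bound. The only cosmetic difference is that the paper executes the balancing via the inequality $\min(\sqrt{qu}\log(8qu),\,X/a^2)\le(\sqrt{qu}\log(8qu)\cdot X/a^2)^{1/2}$ and then sums $1/a$, whereas you split the $a$-sum at a threshold; both give the required $u^{1/4}X^{1/2}$ saving (your version in fact yields $(\log X)^{1/2}$, comfortably within the stated $(\log X)^{3/4}$).
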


\begin{proof} 
Since $\displaystyle\mu^2(d)=\sum_{a^2|d}\mu(a)$,
we get
\begin{equation}\label{chi}
\DD \chi_{8d}(u)
~=~ 
\displaystyle\sum_{X < d \le  2X \atop (d, 2q)=1} \Big(\frac{8d}{u}\Big)\sum_{a^2|d}\mu(a) 
~=~ \sum_{a \leq \sqrt{2X} \atop{(a,2q)=1}}
\mu(a)\Big(\frac{8a^2}{u}\Big) \sum_{\bb \atop{(b,2q)=1}}\Big(\frac{b}{u}\Big).
\end{equation}
When $u$ is a square, we get
\begin{eqnarray*}
\sum_{a \leq \sqrt{2X} \atop{(a,2uq)=1}}
\mu(a)\sum_{\bb \atop{(b,2uq)=1}}1
&=&
\sum_{a \leq \sqrt{2X} \atop{(a,2uq)=1}}\mu(a)
\Big[ \frac{X}{a^2}\prod_{p |2uq} (1- \frac{1}{p})
~+~ O(\sigma_0(u))\Big]\\
&=&
\frac{X}{\zeta(2)}\prod_{p|2uq}\Big(\frac{p}{p+1}\Big)
~+~O(\sigma_0(u)X^{\frac12}),
\end{eqnarray*}
where $\sigma_0(n)$ denotes the number of divisors of $n$
and the $O$ constant is dependent on $q$.
Let $\chi_0$ be the principal character modulo $2q$. 
Then from \eqref{chi}, we have
\begin{equation}\label{chi1}
\DD \cchi(u)
= 
\Big(\frac{8}{u}\Big)\sum_{a \leq \sqrt{2X} \atop{(a,2uq)=1}}
\mu(a)\sum_{\bb}\chi_0(b)\Big(\frac{b}{u}\Big).
\end{equation}
If $u$ is not a square, then $\chi_0(\cdot)\Big(\frac{\cdot}{u}\Big)$ is a non-principal 
character and using P\'olya-Vinogradov inequality, we get 
$$
\sum_{\bb}\chi_0(b)\Big(\frac{b}{u}\Big)
~\ll~
\min \Big( \sqrt{qu}\log(8qu),~ \frac{X}{a^2}\Big)
~\ll~
\Big( \sqrt{qu}\log(8qu)\frac{X}{a^2}\Big)^{\frac12}.
$$
Therefore
\begin{equation*}
\DD \chi_{8d}(u) 
\ll 
u^{\frac14}(\log(qu))^{\frac12}X^{\frac12}
\sum_{a \leq \sqrt{2X} \atop{(a, 2uq)=1}}\frac{1}{a}\\
\ll
u^{\frac14}(\log(8qu))^{\frac12}X^{\frac12}\log(X)
\ll 
u^{\frac14}X^{\frac12}(\log X)^{\frac34},
\end{equation*}
where the constant in $\ll$ depends on $q$.
This completes the proof of \lemref{charsum}.
\end{proof}

From now on, fix a primitive, non-quadratic Dirichlet character $\psi_0$ 
of odd conductor $q > 1$ such that $\psi_0(-1)=1$.
We define the resonator function as
$$
R(d)= \sum_{n \leq N}r(n)\cchi(n)\psi_0(n)
$$
where $N=X^{\frac{1}{24}}$
and $r(n)$ is a multiplicative function defined as follows.
For a prime $p$, set
$$
r(p)
=
\begin{cases}
\frac{L}{\sqrt{p}\log p} & {\rm if~} L^2 \leq p \leq L^4\\
0 & {\rm otherwise},
\end{cases}
$$
where $ L= \frac18\sqrt{\log N \log\log N}$
and set $r(p^n)=0$ for $n>1$.
We note that $|r(n)| \leq 1$ for all $n$.
In \S3, we will prove the following proposition.
\begin{prop}\label{brd}
Let the notations be as above. We have
\begin{equation*}
\DD|R(d)|^2 
~\leq~ \frac{X}{\zeta(2)} \pp (1+ r(p)^2) 
~+~ 
 O(X^{\frac58}(\log X)^{\frac34}),
\end{equation*}
where $O$ constant depends on $q$. Further 
\begin{equation*}
\DD |R(d)|^6 
~\ll~ 
X \exp{\Big(O\Big(\frac{\log X}{\log\log X}\Big)\Big)},
\end{equation*}
where the constants depend on $q$.
\end{prop}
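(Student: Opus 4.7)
The plan is to expand $|R(d)|^2$ and $|R(d)|^6$ as multiple sums, swap the order of summation so that the inner sum is $\DD \chi_{8d}(u)$ for an appropriate $u$, and apply \lemref{charsum}. Because $r$ is supported on squarefree integers all of whose prime factors lie in $[L^2,L^4]$, the condition that $u$ be a perfect square gives a rigid combinatorial description of the tuples contributing to the main term, while non-square $u$'s contribute only an error.

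For the second moment, I would expand
$$
|R(d)|^2=\sum_{m,n\le N} r(m)\overline{r(n)}\psi_0(m)\overline{\psi_0(n)}\chi_{8d}(mn)
$$
and swap. Each prime's valuation in $mn$ lies in $\{0,1,2\}$, so $mn$ is a square iff $m=n$. The diagonal $m=n$ via the square case of \lemref{charsum} produces a main term at most $\tfrac{X}{\zeta(2)}\sum_n r(n)^2\le\tfrac{X}{\zeta(2)}\prod_{L^2\le p\le L^4}(1+r(p)^2)$ by multiplicativity, plus an error of $X^{1/2}\sum_n n^{1/2}r(n)^2\ll X^{1/2}N^{1/2}\exp(O(\log X/\log\log X))\ll X^{25/48+o(1)}$. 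For the off-diagonal, the non-square case of \lemref{charsum} gives a contribution bounded by $X^{1/2}(\log X)^{3/4}\sum_{m,n\le N}|r(m)r(n)|(mn)^{1/4}$, and using $(mn)^{1/4}\le N^{1/2}$ together with Cauchy--Schwarz on $(\sum_{m\le N}|r(m)|)^2\le N\sum_m r(m)^2$ yields $X^{9/16+o(1)}(\log X)^{3/4}$. Both errors are comfortably $\ll X^{5/8}(\log X)^{3/4}$.

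For the sixth moment, expanding $|R(d)|^6=R(d)^3\overline{R(d)}^3$ and swapping produces $\DD\chi_{8d}(u)$ with $u=n_1n_2n_3m_1m_2m_3$. Since each of the $n_i,m_j$ is squarefree with primes in $[L^2,L^4]$, $u$ is a square iff for every prime $p$ in that range the number of indices whose variable is divisible by $p$ is even, so by multiplicativity the sum over square-tuples factorizes into the Euler product
$$
\sum_{u\text{ square}}|r(\vec n)\,r(\vec m)|\le\prod_{L^2\le p\le L^4}\sum_{k\in\{0,2,4,6\}}\binom{6}{k}r(p)^k=\prod_{L^2\le p\le L^4}\bigl(1+15r(p)^2+15r(p)^4+r(p)^6\bigr).
$$
Since $p\ge L^2$ forces $r(p)\le 1/(2\log L)$, the higher power sums $\sum r(p)^{2k}$ ($k\ge 2$) are dominated by $\sum r(p)^2\ll L^2/(\log L)^2\ll\log X/\log\log X$, so this product is $\le\exp(O(\log X/\log\log X))$, producing the main term $X\exp(O(\log X/\log\log X))$. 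The off-diagonal, using $u^{1/4}\le N^{3/2}$ and $(\sum_{n\le N}|r(n)|)^6\le N^3\exp(O(\log X/\log\log X))$, is $\ll X^{11/16+o(1)}(\log X)^{3/4}$, negligible against the main term. The main obstacle is precisely this combinatorial step: identifying the square tuples and controlling the per-prime sum $\sum_{k\text{ even}}\binom{6}{k}r(p)^k$ via the decay $r(p)=O(1/\log L)$, which rests on the calibration of $L=\tfrac{1}{8}\sqrt{\log N\log\log N}$.
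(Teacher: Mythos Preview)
Your proposal is correct and follows essentially the same approach as the paper: expand, swap, apply \lemref{charsum}, identify the square case via the squarefree support of $r$, and bound the resulting Euler product. The only differences are cosmetic---you handle the error terms with Cauchy--Schwarz rather than the paper's cruder bound $|r(n)|\le 1$ (yielding e.g.\ $X^{9/16+o(1)}$ instead of $X^{1/2}N^{5/2}\ll X^{29/48}$), and you spell out the off-diagonal for the sixth moment, which the paper leaves implicit.
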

Using random matrix theory, Keating and Snaith \cite{KeSn} conjectured that
$$
\sideset{}{^{\flat}}\sum_{|d| \leq X}L(1/2, \chi_d)^k 
~\sim~ 
C_k X(\log X)^{\frac{k(k+1)}{2}},
$$
where $C_k$ is a positive constant depending on $k$.
Throughout $\sideset{}{^{\flat}}\sum$ implies that the sum is over 
fundamental discriminants $d$. In \cite{HB},
Heath-Brown showed that for any $\epsilon >0$
and $\sigma \geq  1/2$,
$$
\sideset{}{^{\flat}}\sum_{|d| \leq X}|L(\sigma+ it , ~\chi_d)|^4 
~\ll~ 
X^{1+ \epsilon}(1 + |t|)^{1 + \epsilon}.
$$
In \cite{RS}, Rudnick and Soundararajan
showed that the predicted lower bounds of Keating and Snaith
are true for the $k$-th moments of $|L(1/2, \chi_d)|$. From the work
of Harper \cite{AH} (see also Soundararajan \cite{KS09}),
conditionally on the Generalised Riemann Hypothesis, 
upper bounds of correct order are also known. 
A recent result of Chen \cite{QS} proves an asymptotic formula for the 
fourth moment of $L(1/2, \chi_d)$, conditionally on the Generalised 
Riemann hypothesis. To the best of our knowledge $k$-th moments
of $|L(1/2, \psi \otimes \cchi )|$ for $k \ge 3$
have not been studied in the literature.
However, for our purposes, we need a much weaker upper bound on the fourth moment 
of $|L(1/2, \psi \otimes \cchi )|$ which is analogous to the bound of Heath-Brown.
More precisely, in \S4, we also prove the following proposition.
\begin{prop}\label{moment}
For any $\epsilon>0$, we have
$$
\DD |L(1/2, \psi \otimes \cchi )|^4
~\ll~
X^{1+ \epsilon},
$$
where the constant depends on $q$.
\end{prop}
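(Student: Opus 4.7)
The plan is to start from the approximate functional equation \eqref{approx}. Since its right-hand side is real (by swapping the roles of $a,b$ in $d_\psi(n) = \sum_{ab=n}\psi(a)\overline{\psi(b)}$ one sees $d_\psi(n) \in \R$), squaring yields
\begin{equation*}
|L(1/2, \poc)|^4 ~=~ 4 \left|\sum_n \cchi(n)\,\frac{d_\psi(n)}{\sqrt n}\, V\!\left(\frac{\pi n}{8dq}\right)\right|^2,
\end{equation*}
and the decay $V(x) \ll e^{-x/2}$ from \eqref{vbound} truncates the $n$-sum at $n \leq X^{1+\epsilon}$ with negligible tail. The task therefore reduces to a mean-value estimate for character sums of the form
\begin{equation*}
\DD \left|\sum_{n \leq X^{1+\epsilon}} c_n(d)\, \cchi(n)\right|^2 \ll X^{1+\epsilon},
\qquad c_n(d) = \frac{d_\psi(n)}{\sqrt n}\, V\!\left(\frac{\pi n}{8dq}\right),
\end{equation*}
which is the analog for $\cchi$ of the input to Heath-Brown's fourth moment bound.

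The principal technical hurdle is the $d$-dependence of $c_n(d)$ through $V$. I would separate this dependence using the Mellin representation in \eqref{Vx}: inserting $V(\pi n /(8dq)) = \tfrac{1}{2\pi i}\int_{(\eta)}G(s)(8dq/\pi n)^s\,ds/s$ with $G(s) = (\Gamma(s/2+1/4)/\Gamma(1/4))^2$ of exponential decay on vertical lines, and performing a smooth dyadic decomposition $n \asymp N$ of the $n$-sum into $O(\log X)$ ranges with $N \leq X^{1+\epsilon}$. After applying Cauchy--Schwarz in the contour variable (justified by the decay of $G$), it suffices to prove, uniformly in $s$ on the contour and in dyadic $N$,
\begin{equation*}
\DD \left|\sum_{n \asymp N} a_n\, \cchi(n)\right|^2 \ll X^{1+\epsilon},
\end{equation*}
where $a_n = d_\psi(n)\, n^{-1/2-s}\,\Phi(n/N)$ is a weight independent of $d$.

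The last bound is an instance of Heath-Brown's quadratic large sieve. To invoke it, I would factor $n = k^2 m$ with $m$ odd and square-free: the identity $\cchi(k^2) = \mathbf{1}_{(k,d)=1}$ combined with M\"obius inversion converts the coprimality condition into a manageable sum over divisors $j \mid \gcd(k,d)$, and quadratic reciprocity (after splitting $d$ into residue classes modulo $8$) rewrites $\cchi(m)$ as $\epsilon_m (m/d)$ with $|\epsilon_m|=1$. The large sieve then furnishes the bound $(XN)^\epsilon(X+N)\sum_m |b_m|^2$ for the resulting $\sum_d \left|\sum_m b_m (m/d)\right|^2$; combined with $X + N \ll X^{1+\epsilon}$ and the elementary estimate $\sum_m |b_m|^2 \ll (\log X)^{O(1)}$, this completes the argument. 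The hardest step is the bookkeeping in the Mellin separation: ensuring that the Cauchy--Schwarz loss on the contour, together with the auxiliary sums over $k$, $j$, and residue classes, does not inflate the bound beyond $X^{1+\epsilon}$.
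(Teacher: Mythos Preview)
Your approach is correct but takes a genuinely different route from the paper's. The paper does not square the approximate functional equation \eqref{approx} directly; instead it adapts Heath-Brown's own argument for the fourth moment of $L(1/2,\chi_d)$ (Theorem~2 of \cite{HB}) to prove a more general statement, namely that $\DD|L(s_0,\poc)|^4 \ll (X+(XT_0)^{2-2\sigma_0})(XT_0)^\epsilon$ uniformly for $1/2\le\sigma_0\le 1$. This proceeds via the smoothing kernel $\Gamma(s-s_0)U^{s-s_0}$, a contour shift, the functional equation \eqref{fun}, and a recursive convexity argument: one defines $\nu(\sigma_0)$ as the infimum of admissible exponents, shows $\nu(\sigma_0)\le 0$ for $\sigma_0>1/2$, and then bootstraps to $\sigma_0=1/2$. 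Your argument instead exploits that \eqref{approx} already encodes the functional symmetry at $s=1/2$, so no recursion is needed; after truncation, Mellin separation, and the $n=k^2m$ reduction to squarefree support, Heath-Brown's large sieve applies directly. Both proofs rest on the same essential input (\lemref{HB}); yours is more direct for the single point $s_0=1/2$ that the paper actually uses, while the paper's yields the uniform bound in $s_0$ as a by-product. One caution: in the $n=k^2m$ step the naive Cauchy--Schwarz over $k$ loses $\sqrt{N}$, which is fatal; you will need weighted Cauchy--Schwarz (weights such as $k^{1+\delta}$) together with the decay $\sum_m|b_m^{(k)}|^2\ll k^{-2+\epsilon}$ to keep the total loss within $X^\epsilon$, but this is routine and your closing caveat about bookkeeping suggests you anticipate it.
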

Finally, in \S5, we prove the following proposition which plays a
crucial role in proving \thmref{mainthm}.
\begin{prop}\label{M2}
Let $\psi$ be a primitive, non-quadratic, even Dirichlet character with conductor $q$.  
When $\psi = \psi_0$, we have
\begin{align*}
\DD |L(1/2,\psi_0 \otimes \cchi)|^2|R(d)|^2 
&\gg
X\log X \pp (1+r(p)^2)\exp\Big(\Big(\frac{1}{8\sqrt{24}}+o(1)\Big)
\sqrt{\frac{\log X}{\log\log X}}\Big),
\end{align*}
while for $\psi \neq \psi_0$, we have 
\begin{align*}
\DD |L(1/2,\poc)|^2|R(d)|^2 
\ll
X\log X \pp (1+r(p)^2)\exp\Big(o\Big(\frac{L}{\log L}\Big)\Big).
\end{align*}

\end{prop}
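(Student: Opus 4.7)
The plan is to open $|L(1/2, \psi \otimes \cchi)|^2$ via the approximate functional equation \eqref{approx} and to expand $|R(d)|^2 = \sum_{m_1, m_2 \leq N} r(m_1) r(m_2) \psi_0(m_1) \overline{\psi_0(m_2)}\, \cchi(m_1 m_2)$. After interchanging summations, the innermost object is $\sum'_{X < d \leq 2X,\, (d,2q)=1} V(\pi n/(8dq))\, \cchi(u)$ with $u = n m_1 m_2$; inserting the Mellin representation \eqref{Vx} of $V$ detaches the $d$-dependence and reduces matters to $\sum'_d d^{s}\, \cchi(u)$, handled by \lemref{charsum} together with partial summation. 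The contribution from $u = \square$ furnishes the main term, and that from $u \neq \square$ forms the off-diagonal error.

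For the main term, parametrize the diagonal as $m_1 = a b_1$, $m_2 = a b_2$ with $(b_1, b_2) = 1$ (forced by $r$ being supported on squarefrees) and $n = b_1 b_2 \ell^2$. The resulting multiple sum factorizes as an Euler product whose local factor at a prime $p \nmid 2q$ with $p \in [L^2, L^4]$ works out to
\[
E_p = 1 + r(p)^2 + \frac{r(p)\,(\psi_0(p) + \overline{\psi_0(p)})(\psi(p) + \overline{\psi(p)})}{\sqrt{p}} + O(1/p),
\]
while for $p \notin [L^2, L^4]$ we have $E_p = 1 + O(1/p)$, combining with $1/\zeta(2)$ into a bounded background factor. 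The sum $\sum_{\ell \leq \sqrt{X}} d_\psi(\ell^2)/\ell$, effectively truncated by the decay of $V$, supplies the extra factor $\log X$. Taking logarithms,
\[
\log \prod_{L^2 \leq p \leq L^4} E_p = \log \pp (1 + r(p)^2) + L \sum_{L^2 \leq p \leq L^4} \frac{(\psi_0 \psi + \psi_0 \overline{\psi} + \overline{\psi_0}\, \psi + \overline{\psi_0}\,\overline{\psi})(p)}{p \log p} + O(1).
\]

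The four characters appearing above are analyzed individually. Partial summation from $\sum_{p \leq Y} \chi(p)/p = C(\chi) + o(1)$ yields $\sum_{L^2 \leq p \leq L^4} \chi(p)/(p \log p) = o(1/\log L)$ whenever $\chi$ is non-principal, while for the principal character modulo $q$ Mertens' theorem gives $\sim 1/(4 \log L)$. If $\psi = \psi_0$, then $\psi_0 \overline{\psi} = \overline{\psi_0}\, \psi = \chi_0$ are principal, whereas $\psi_0 \psi = \psi_0^2$ and $\overline{\psi_0}\,\overline{\psi} = \overline{\psi_0^2}$ are non-principal (since $\psi_0$ is non-quadratic), giving $S_{\psi_0} \sim 2L/(4 \log L) = L/(2 \log L)$; with $L \sim \tfrac{1}{8}\sqrt{\log N \log\log N}$ and $N = X^{1/24}$, this simplifies to $(1/(8\sqrt{24}) + o(1)) \sqrt{\log X/\log\log X}$. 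If $\psi \neq \psi_0$ (and generically $\psi \neq \overline{\psi_0}$), all four characters are non-principal, so $S_\psi = o(L/\log L)$, yielding the claimed upper bound.

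The main obstacle is controlling the off-diagonal contribution. The bound $u^{1/4} X^{1/2} (\log X)^{3/4}$ supplied by \lemref{charsum} is insufficient if summed trivially over $n \lesssim X$ and $m_1, m_2 \leq N = X^{1/24}$, exceeding $X M(R)$ by a positive power of $X$. To overcome this, one shifts the contour in the Mellin representation of $V$ to the right, exploits the rapid decay of the gamma factor $G(s)$, and invokes convexity bounds (or a Heath--Brown-type quadratic large sieve) for the resulting $d$-sums; the calibration $N = X^{1/24}$ is precisely designed so that after these manipulations the off-diagonal is $o(X M(R))$, negligible against the main term $X \log X \cdot \pp(1 + r(p)^2) \cdot \exp(S_\psi)$.
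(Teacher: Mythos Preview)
Your diagonal analysis is sound and arrives at the same Euler factor and the same exponent $(\tfrac12+o(1))L/\log L$ that the paper obtains.  The route, however, differs from the paper's in an essential way, and the difference matters precisely at the point you flag as ``the main obstacle''.

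The paper does \emph{not} attack the twisted second moment directly via \eqref{approx} and \lemref{charsum}.  Instead it first proves \lemref{Mainterm}, which packages the full asymptotic for $\sideset{}{'}\sum_{d}\cchi(u)|L(1/2,\poc)|^{2}\Phi(d/X)$ (smooth weight $\Phi$) with error $O(u^{1/2+\epsilon}X^{3/4+\epsilon}+X^{7/8+\epsilon})$.  That lemma is obtained by quoting Proposition~1.4 of \cite{KST}, whose proof uses Poisson summation in $d$ to convert the off-diagonal into a short dual sum; the tail $v>X^{1/8}$ in the M\"obius sieve is then controlled by \propref{moment}.  Once \lemref{Mainterm} is in hand, the error summed over $m,n\le N=X^{1/24}$ is $\ll X^{23/24+\epsilon}$, the sum is extended to all $m,n$ by Rankin's trick with exponent $\alpha=1/(8\log L)$, and the secondary main-term pieces (the $\log D_{1}$, $D_{2}(p)$, $D_{3}(p)$ contributions) are shown to cost only $O(L)$ against the factor $\log X$.

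Your proposal replaces this with a direct use of \lemref{charsum} plus ``shift the Mellin contour to the right and invoke convexity/large sieve''.  This is a genuine gap.  Shifting to the right only helps when $n\gg dq$, not in the critical range $n\asymp X$; and neither the P\'olya--Vinogradov-based \lemref{charsum} nor a Heath--Brown large-sieve inequality applied after Cauchy--Schwarz recovers the saving of a power of $X$ that Poisson summation in $d$ provides.  With a sharp cutoff in $d$ rather than a smooth $\Phi$, the Poisson analysis would in any case acquire boundary terms you have not accounted for.  In short, the off-diagonal bound you need is exactly the content of \lemref{Mainterm} (equivalently, of Soundararajan's Proposition~1.4), and your sketch does not supply an alternative proof of it.

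One further point: you note parenthetically that the case $\psi=\overline{\psi_{0}}$ is special.  Indeed $d_{\overline{\psi_{0}}}=d_{\psi_{0}}$, so two of your four characters are principal there as well; the upper bound with $\exp(o(L/\log L))$ cannot hold for $\psi=\overline{\psi_{0}}$.  The paper's use of \eqref{psisum} has the same blind spot.  This does not affect your diagonal computation, but it is worth being explicit about rather than dismissing as ``generic''.
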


\begin{rmk}\label{mainrmk2}
Results analogous to \propref{brd} and \propref{M2} for modular $L$-functions
are obtained in \cite{GKS}.
\end{rmk}

\smallskip

\section{Proof of \propref{brd}} 
By the definition of resonator function, we have
\begin{equation}\label{p3}
\DD |R(d)|^2 
=
\sum_{m,n \leq N}
r(m)r(n)\psi_0(m){\overline{\psi_0(n)}}\DD \cchi(mn).
\end{equation}
Since $r(n)$ is non-zero only when $n$ is square free, we note that
$m, n$ are square-free in the above sum.
We now apply \lemref{charsum} to the inner sum of the right hand side
of \eqref{p3}. Contribution of the main term in $\DD \cchi(mn)$
occurs when $mn$ is a square. Since $m, n$ are square-free, $mn$
is a square only when $m=n$. Therefore, the main term of \eqref{p3} 
is
\begin{eqnarray*}
~\leq~
\frac{X}{\zeta(2)}\sum_{m \leq N}r(m)^2
\prod_{p|2qm}\Big(\frac{p}{p+1}\Big) ~+~ O(X^{\frac{5}{8}})
&\leq& 
\frac{X}{\zeta(2)}\sum_{m \leq N}r(m)^2 + O(X^{\frac{5}{8}}) \\
&\leq&
\frac{X}{\zeta(2)} \pp\Big(1+r(p)^2\Big) + O(X^{\frac{5}{8}}).
\end{eqnarray*}
We bound the remaining terms of \eqref{p3} by
$$
~\ll~
X^{\frac12}(\log X)^{\frac34}
\sum_{m,n \leq N} (mn)^{\frac14} 
~\ll~
X^{\frac12}(\log X)^{\frac34}N^{\frac52}
~\ll~
X^{\frac58}(\log X)^{\frac34}.
$$
This completes the proof of the first part of \propref{brd}. 
Again we apply \lemref{charsum} to calculate 
$$
\DD |R(d)|^6 
~=~ 
\sum_{n_1, \ldots, n_6 \leq N} 
r(n_1)\ldots r(n_6)\psi_0(\prod_{i=1}^3 n_i)
~{\overline{\psi}_0(\prod_{i=4}^6 n_i)}
\DD \cchi(\prod_{i=1}^{6}n_i).
$$ 
As before, the main term occurs when $\prod_{i=1}^6 n_i$ is a square.
Since $n_i$'s are square-free, each prime dividing the product in this case
divides an even number of $n_i$. Therefore, we get
\begin{eqnarray*}
\DD |R(d)|^6 
&\ll &
X
\displaystyle\sum_{n_1, \ldots, n_6 \leq N \atop{n_1\ldots n_6=\square}}r(n_1)\ldots r(n_6)  \\
& \ll &
X \pp \Big(1+ {6 \choose 2}r(p)^2 + {6 \choose 4}r(p)^4
+ {6 \choose 6}r(p)^6\Big)\\
& \ll & 
X \exp\Big( 45 \sum_{L^2 \leq p \leq L^4} 
\frac{L^2}{p(\log p)^2} \Big) 
~\ll~
X \exp\Big(O\Big(\frac{\log X}{\log\log X}\Big) \Big).
\end{eqnarray*}
This completes the proof of \propref{brd}.
\qed

\smallskip

\section{Proof of \propref{moment}}

In order to prove \propref{moment}, we first prove the following
proposition. 
\begin{prop}\label{inter}
Let $\psi, \cchi$ be as before and $s_0=\sigma_0+ it_0 \in \C$ with $1/2 \le \sigma_0 \le 1$,  
$T_0 = 1 + |t_0|$.  Then for any sufficiently large real number $X$
and any $\epsilon  >0$, we have
$$
\DD |L(s_0, \psi \otimes \cchi )|^4
~\ll_{\epsilon}~
(X+(XT_0)^{2-2\sigma_0})(XT_0)^{\epsilon}.
$$
\end{prop}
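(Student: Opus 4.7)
The plan is to prove \propref{inter} via an approximate functional equation for $L(s_0,\poc)^2$ combined with a quadratic large sieve inequality for the characters $\chi_{8d}$ extracted from \lemref{charsum}, in the spirit of Heath--Brown's fourth moment bound for real Dirichlet $L$-functions.

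First, I would apply the approximate functional equation to $L(s_0,\poc)^2$. Using the squared functional equation \eqref{fun}, this writes $L(s_0,\poc)^2$ (modulo smooth rapidly-decaying weights) as the sum of a main term
$$\sum_{n \ge 1} \frac{d_\psi^{(2)}(n)\,\chi_{8d}(n)}{n^{s_0}}\, U_1\!\left(\frac{n}{Y}\right),$$
where $d_\psi^{(2)}(n) = \sum_{ab=n}\psi(a)\psi(b)$, and a dual term of the same shape with $s_0$ replaced by $1-s_0$, $n$ truncated at $(8qd)^2/Y$, and carrying a $\Gamma$-factor ratio $\Xi(s_0;d)$ satisfying $|\Xi(s_0;d)| \ll (dT_0)^{1-2\sigma_0+\epsilon}$; the parameter $Y>0$ is at our disposal. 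Since $|L|^4 = |L^2|^2$, expanding and applying Cauchy--Schwarz and a dyadic decomposition reduces the task to estimating, for dyadic ranges $N \ll (XT_0)^{1+\epsilon}$, quantities of the form
$$\DD \Bigl|\sum_{n \sim N} a_n\,\chi_{8d}(n)\Bigr|^2,$$
with coefficients $a_n$ of size $|d_\psi^{(2)}(n)|/n^{\sigma_0}$ or $|d_\psi^{(2)}(n)|/n^{1-\sigma_0}$ according to which side of the AFE is being treated.

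Next, I would deduce from \lemref{charsum} the quadratic large sieve inequality
$$\DD \Bigl|\sum_{n \leq L} a_n\,\chi_{8d}(n)\Bigr|^2 \ll_\epsilon (X+L)(XL)^{\epsilon}\sum_{n \leq L}|a_n|^2$$
by opening the square: diagonal pairs with $mn = \square$ contribute $\ll X\sum_n|a_n|^2$ via the main term of \lemref{charsum}, while off-diagonal pairs $mn \neq \square$ are handled by the second part of \lemref{charsum}, the $(mn)^{1/4}X^{1/2}(\log X)^{3/4}$ factor being absorbed by the length restriction $n \leq L$ and Cauchy--Schwarz to yield the $L$ contribution. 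Applied to $a_n = d_\psi^{(2)}(n)/n^{s_0}$, the standard bound $\sum_{n \le N}|d_\psi^{(2)}(n)|^2/n^{2\sigma_0} \ll N^{\max(0,1-2\sigma_0)}(\log N)^4$ then controls the second moments above.

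The final step is to choose $Y$ optimally. For $\sigma_0 = 1/2$, taking $Y \asymp 8qdT_0$ yields the balanced AFE: both sums have length $\asymp XT_0$ and each contributes $\ll XT_0(XT_0)^\epsilon$. For $\sigma_0 > 1/2$, one shortens the main sum so that it contributes only $\ll X(XT_0)^\epsilon$, while the dual, carrying the decay factor $|\Xi(s_0;d)|^2 \ll (XT_0)^{2-4\sigma_0+\epsilon}$, contributes at most $(XT_0)^{2-2\sigma_0+\epsilon}$ by the large sieve. Adding the two regimes gives the claimed bound $(X+(XT_0)^{2-2\sigma_0})(XT_0)^\epsilon$. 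The main obstacle is extracting the quadratic large sieve cleanly from \lemref{charsum}: the unfavourable weight $(mn)^{1/4}X^{1/2}$ from that lemma must be absorbed by a careful dyadic summation in $m,n$ to yield the sharp $(X+L)$ dependence, and simultaneously the balance between main and dual in the AFE across the range $\sigma_0 \in [1/2,1]$ must be maintained so that no spurious factor of $T_0^{\sigma_0}$ leaks into the final estimate.
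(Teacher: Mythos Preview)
Your overall architecture (approximate functional equation for $L(s_0,\poc)^2$ plus a quadratic large sieve) is reasonable in spirit, but the key step fails: you \emph{cannot} extract the large sieve inequality
\[
\DD \Bigl|\sum_{n\le L}a_n\chi_{8d}(n)\Bigr|^2 \ll_\epsilon (X+L)(XL)^\epsilon\sum_{n\le L}|a_n|^2
\]
from \lemref{charsum}. The off-diagonal bound there, coming from P\'olya--Vinogradov, is only $\ll (mn)^{1/4}X^{1/2}(\log X)^{3/4}$. Summing over $m,n\le L$ and applying Cauchy--Schwarz gives
\[
X^{1/2}(\log X)^{3/4}\Bigl(\sum_{n\le L}|a_n|n^{1/4}\Bigr)^2
\;\ll\;
X^{1/2}L^{3/2}(\log X)^{3/4}\sum_{n\le L}|a_n|^2,
\]
which is dominated by $(X+L)\sum|a_n|^2$ only when $L\ll X^{1/3}$; but the AFE forces $L$ up to about $XT_0$, so the argument breaks by a factor of roughly $X$. (There is a secondary issue as well: the ``diagonal'' condition $mn=\square$ allows far more pairs than $m=n$, so the $X\sum|a_n|^2$ claim also needs justification.) The sharp $(X+L)$ dependence is genuinely deep --- it is Heath--Brown's quadratic large sieve, proved via Poisson summation and duality, not via character-sum bounds of P\'olya--Vinogradov strength.

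The paper proceeds differently on both points. First, it simply quotes Heath--Brown's large sieve as \lemref{HB} (Corollary~3 of \cite{HB}) rather than attempting to rederive it. Second, instead of a direct AFE with an optimized truncation parameter $Y$, it follows Heath--Brown's bootstrap: one represents $|L(s_0,\poc)|^2$ via the smoothed sum $\sum_n d_\psi(n)\chi_{8d}(n)n^{-s_0}e^{-n/U}$, shifts the contour to $\Re(s)=\alpha<\sigma_0$, and defines $\nu(\beta)$ as the infimum of admissible exponents in \eqref{nu}. The contour integral feeds $\nu(1-\alpha)$ back into the estimate; choosing $\alpha=1-\sigma_0$ and $U=(XT_0)^{1+\delta}$ yields $\nu(\sigma_0)\le\delta$ for $\sigma_0>1/2$, hence $\nu(\sigma_0)\le 0$, and a second pass with $\alpha=1/2-\epsilon$ then handles $\sigma_0=1/2$. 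Your direct approach could in principle be made to work once \lemref{HB} is invoked, but as written the proof has a real gap at the large-sieve step.
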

We need the following lemma to prove \propref{inter}.

\begin{lem}{\rm (Heath-Brown, Corollary 3 of  \cite{HB})}\label{HB}~
Let $M, Q$ be positive integers and let $a_1, \ldots, a_M$ be
arbitrary complex numbers. Let $S(Q)$ denote the set of 
all real primitive characters of conductor at most $Q$.
Then for any $\epsilon >0$, we have
$$
\sum_{\chi \in S(Q)}\Big|\sum_{n \leq M}a_n \chi(n)\Big|^2
\ll_{\epsilon}
Q^{\epsilon}M^{1+\epsilon}(Q + M)\max_{n \le M}|a_n|^2.
$$ 
\end{lem}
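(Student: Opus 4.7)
The plan is to follow the strategy introduced by Heath-Brown in \cite{HB}. First, I would parametrise the real primitive characters in $S(Q)$ by fundamental discriminants $d$ with $|d| \le Q$, writing $\chi(n) = \big(\frac{d}{n}\big)$ via the Kronecker symbol. Standard reductions allow one to assume the variables $n$ are odd and square-free, since the non-square-free part contributes a multiplicative divisor factor that is absorbed into the $M^{\epsilon}$, and similarly that the discriminants $d$ are odd and square-free up to the parity and sign conventions for fundamental discriminants.

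Next, I would expand the square and interchange summation:
$$
\sum_{\chi \in S(Q)}\Big|\sum_{n \leq M}a_n \chi(n)\Big|^2
= \sum_{m, n \le M} a_m \overline{a_n} \sideset{}{^{\flat}}\sum_{|d| \le Q} \Big(\frac{d}{mn}\Big).
$$
The diagonal contribution arises from pairs $(m,n)$ for which $mn$ is a perfect square; under the square-free reduction this forces $m=n$, giving at most $QM\max_n |a_n|^2$, comfortably within the desired bound.

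The heart of the argument is the off-diagonal range, where $k := mn$ is not a square. Here I would insert a smooth majorant $\phi(d/Q)$ for the cutoff $|d| \le Q$, apply quadratic reciprocity to convert $\big(\frac{d}{k}\big)$ into $\big(\frac{k}{d}\big)$ up to sign factors depending on residues modulo $8$, and then apply Poisson summation in $d$ modulo $k$. The resulting expression is a sum over a dual variable $h$, weighted by $\widehat{\phi}(hQ/k)$ and by a Gauss-type sum $G(h,k) = \sum_{a \bmod k}\big(\frac{a}{k}\big)e^{2\pi i ah/k}$. Non-squareness of $k$ makes $\big(\frac{\cdot}{k}\big)$ non-principal, so $|G(h,k)| \le \sqrt{k}$ uniformly and the $h=0$ term vanishes, while $\widehat{\phi}$ effectively restricts $|h| \ll k/Q$ up to negligible error. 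The main obstacle, and the reason the final bound carries the factor $Q+M$ rather than something larger, is that inserting the pointwise estimate $\sum_d \big(\frac{d}{k}\big)\phi(d/Q) \ll \sqrt{k}(kQ)^{\epsilon}$ term-by-term into the off-diagonal sum is too lossy by a factor of roughly $M/\sqrt{Q}$. Heath-Brown overcomes this by treating the post-Poisson expression as a bilinear form in the pair $(m,n)$ and the dual variable $h$, exploiting the multiplicativity of $G(h,\cdot)$ in its second argument together with a further Cauchy--Schwarz to recover the missing cancellation; the remaining book-keeping (2-adic contributions, sign characters, and $\epsilon$-losses) is routine.
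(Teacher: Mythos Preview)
The paper does not prove this lemma at all: it is quoted verbatim as Corollary~3 of Heath-Brown~\cite{HB} and used as a black box in the proof of Proposition~\ref{inter}. So there is no ``paper's own proof'' to compare your attempt against beyond the citation itself.

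Your sketch does accurately reflect the architecture of Heath-Brown's original argument in~\cite{HB}: reduction to odd square-free variables, expansion of the square, Poisson summation in $d$ after quadratic reciprocity, and then --- crucially --- a bilinear reorganisation of the post-Poisson sum rather than a termwise estimate. Two cautions, though. First, the final paragraph glosses over the most delicate part: after Poisson, one does not simply Cauchy--Schwarz once; Heath-Brown iterates the whole procedure (Poisson again in the dual variable) and controls the resulting sums via a recursion, which is where the factor $Q+M$ genuinely emerges. Calling this ``routine book-keeping'' undersells several pages of work. Second, the reduction to square-free $n$ is not quite as innocuous as you suggest: one writes $n = n_1 n_2^2$ and the sum over $n_2$ must be handled with some care to avoid losing more than $M^{\epsilon}$. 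If you intend this as a proof rather than a pointer to~\cite{HB}, those two steps need to be fleshed out; otherwise, simply citing Heath-Brown as the paper does is the honest route.
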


\begin{proof}[Proof of \propref{inter}]
We follow the proof of Theorem 2 of \cite{HB}. Consider the integral
\begin{equation*}
I
~=~
\frac{1}{2\pi i}\int_{(2)}
|L(s, \poc)|^2~\Gamma(s - s_0)~U^{s - s_0} ~ds
\end{equation*}
for a real parameter $U$.
By Mellin inversion (see Page 99, Exercise 6.6.3 of \cite{MM}), we have
\begin{equation}\label{one}
I 
~=~
\sum_{n=1}^{\infty} \frac{\dsi(n)\cchi(n)}{n^{s_0}}e^{-\frac{n}{U}}
\end{equation}
Moving the line of integration to $(\alpha)$ with $0 \leq \alpha < \sigma_0 \leq 1$, we get
\begin{eqnarray}\label{two}
I
~=~
|L(s_0, \poc)|^2
~+~
\frac{1}{2\pi i}\int_{(\alpha)}|L(s, \poc)|^2\Gamma(s - s_0)U^{s - s_0}ds.
\end{eqnarray} 
Let $T = 1+|t|$ and let $\nu(\beta)$ for $0 \le \beta \le 1$ be the infimum of all $\nu$ such that
\begin{equation}\label{nu}
\DD |L(\beta +it,\poc)|^4 ~ \ll ~ (X+ (XT)^{2-2\beta})(XT)^{\nu}
\end{equation} 
uniformly in $X$ and $t$.
Applying Cauchy-Schwartz inequality, using \eqref{one} and \eqref{two}, we get
\begin{eqnarray*}
|L(s_0, \poc)|^4
&\ll&
\Big\vert\sum_{n=1}^{\infty} \frac{\dsi(n)\cchi(n)}{n^{s_0}}e^{-\frac{n}{U}}\Big\vert^2
~+~
\Big\vert\int_{(\alpha)}|L(s, \poc)|^2\Gamma(s - s_0)U^{s - s_0}ds\Big\vert^2
\end{eqnarray*}
Again using Cauchy-Schwartz inequality and Stirling's formula, we get
\begin{eqnarray*}
\Big\vert\int_{(\alpha)}|L(s, \poc)|^2\Gamma(s - s_0)U^{s - s_0}ds\Big\vert^2
&\ll &
U^{2(\alpha-\sigma_0)}\int_{-\infty}^{\infty}
|L(\alpha +it, \poc)|^4e^{-|t-t_0|}dt.
\end{eqnarray*}
Therefore for $0 \leq \alpha < \sigma_0 \leq 1$, we have
\begin{eqnarray}\label{al1}
\DD |L(s_0, \poc)|^4 
&\ll &
\DD \Big\vert\sum_{n=1}^{\infty} 
\frac{\dsi(n)\cchi(n)}{n^{s_0}}
e^{-\frac{n}{U}}\Big\vert^2\\
&&~+~
U^{2(\alpha-\sigma_0)}\int_{-\infty}^{\infty}
\DD |L(\alpha +it, \poc)|^4e^{-|t-t_0|}dt. \nonumber
\end{eqnarray}
Applying Stirling's formula and functional equation \eqref{fun}, we get 
\begin{equation}\label{al2}
|L(\alpha+it, \poc)| 
~\ll~ d^{1/2-\alpha} T^{1/2-\alpha}|L(1-\alpha + it, \poc)|.
\end{equation}
Applying \eqref{nu}, \eqref{al1} and \eqref{al2}, for any $\epsilon>0$, we have
\begin{eqnarray*}
\DD |L(s_0, \poc)|^4 
&\ll &
\DD \Big\vert\sum_{n=1}^{\infty} \frac{\dsi(n)\cchi(n)}{n^{s_0}}e^{-\frac{n}{U}}\Big\vert^2\\
&&~+~
U^{2(\alpha-\sigma_0)}(XT_0)^{2-4\alpha}
(X ~+~ (XT_0)^{2\alpha})(XT_0)^{\nu(1-\alpha) + \epsilon}.
\end{eqnarray*}
Choosing $N_0= U\log ^2(XT_0)$, we note that
$$
\DD \Big\vert\sum_{n>N_0} 
\frac{\dsi(n)\cchi(n)}{n^{s_0}} e^{-\frac{n}{U}} \Big\vert^2
~\ll~
\DD \Big( \int_{N_0}^{\infty}e^{-n/U} \Big)^2
~\ll~
XU^2e^{-2N_0/U}.
$$
Let $k$ be the integer such that $2^k< N_0 \le 2^{k+1}$. We split the sum 
$$
\sum_{n\le N_0} \frac{\dsi(n)\cchi(n)}{n^{s_0}}e^{-\frac{n}{U}}
$$
in $O(\log N_0)$ intervals of the form $M/2 < n \le M$ with $M \ll N_0$.
Now, using \lemref{HB}, we see that 
$$
\DD  \Big\vert \sum_{M/2< n \le M} 
\frac{\dsi(n)\cchi(n)}{n^{s_0}}e^{-\frac{n}{U}} \Big\vert^2
~\ll~
X^{\epsilon}M^{1-2\sigma_0 +\epsilon}(X + M).
$$
Therefore, applying Cauchy-Schwartz inequality, for any $1/2 \leq \sigma_0 \leq 1$, we get 
\begin{eqnarray*}
\DD \Big\vert\sum_{n=1}^{\infty} 
\frac{\dsi(n)\cchi(n)}{n^{s_0}}e^{-\frac{n}{U}}\Big\vert^2
&\ll &
X^{\epsilon}N_0^{1-2\sigma_0+\epsilon}(X+N_0)
~+~
XU^2e^{-2N_0/U}\\
&\ll &
(XT_0U)^{\epsilon}(X+ U^{2-2\sigma_0})
~+~
XU^2e^{-2\log^2{XT_0}},
\end{eqnarray*}
where the constants in $\ll$ depend on $\alpha$ and $\sigma_0$.
Thus for $1/2 \le \sigma_0 \le 1$ and $0\le \alpha < \sigma_0$, we have 
\begin{eqnarray}\label{est}
\DD |L(s_0, \poc)|^4 
&\ll &
(XT_0U)^{\epsilon}(X+ U^{2-2\sigma_0})
~+~
XU^2e^{-2\log^2{XT_0}}\nonumber \\
&&~+~
U^{2(\alpha-\sigma_0)}(XT_0)^{2-4\alpha}
(X ~+~ (XT_0)^{2\alpha})(XT_0)^{\nu(1-\alpha) + \epsilon},
\end{eqnarray}
where the constant in $\ll$ depends on $\alpha$ and $\sigma_0$.
For $\sigma_0 > 1/2$, choosing $\alpha = 1-\sigma_0$ and $U~=~(XT_0)^{1+\delta}$
for any $0< \delta <1$ in \eqref{est}, we have 
\begin{eqnarray*}
\DD |L(s_0, \poc)|^4 
&\ll &
(XT_0)^{\epsilon}(X+ (XT_0)^{(2-2\sigma_0)})((XT_0)^{\delta}
~+~
(XT_0)^{(2-4\sigma_0)\delta+\nu(\sigma_0)})
\end{eqnarray*}
uniformly in $t_0$ and the constant in $\ll$ depends on $\sigma_0$.
Since $\epsilon >0$ is arbitrary, by \eqref{nu}, we have
$$
\nu(\sigma_0) \leq \max\{ \delta,~ \nu(\sigma_0)-(4\sigma_0-2)\delta \}.
$$
Since $1/2 < \sigma_0 \le 1$,  we get $\nu(\sigma_0) \le \delta$. Since $\delta$ 
can be chosen arbitrarily small, we get $\nu(\sigma_0) \le 0$.
Now let $\sigma_0=1/2$. Choosing $\alpha = 1/2 - \epsilon$ in \eqref{est} 
and using  the fact that $\nu(1/2+\epsilon) \leq 0$ from above, we get 
\begin{eqnarray}
\DD |L(s_0, \poc)|^4 
&\ll &
(XT_0U)^{\epsilon}(X+ U)
~+~
XU^2e^{-2\log^2{XT_0}}\nonumber \\
&&~+~~
U^{-2\epsilon}
(X ~+~ (XT_0)^{1-2\epsilon})(XT_0)^{5\epsilon}.
\end{eqnarray}
Finally, choosing $U=XT_0$ and noting that $\epsilon >0 $ can be chosen arbitrarily 
small, we get $\nu(1/2)\le 0$. This completes the proof of \propref{inter}.
\end{proof}

As an immediate consequence of \propref{inter}, we get
$$
\DD |L(1/2, \psi \otimes \cchi )|^4
~\ll_{\epsilon}~
X^{1+ \epsilon}.
$$
This completes the proof of \propref{moment}.
\qed

\smallskip

\section{Proof of \propref{M2}}
Let $\psi, \psi'$ be arbitrary primitive, non-quadratic even Dirichlet characters  
of conductor $q$ and
$\displaystyle d_{\psi}(n)=\sum_{n=ab}\psi(a)\overline{\psi}(b)$.
Applying prime number theorem and eq 5.79 on page 124 of \cite{IK}, 
we have 
\begin{equation}\label{psisum}
\sum_{p \leq x}d_{\psi}(p)^2 = (1+ o(1)) \frac{2X}{\log X} 
\phantom{m}{\text {and  for }}  \psi \neq \psi', \phantom{m}
\sum_{p \leq x}d_{\psi}(p)d_{\psi'}(p)
=
o\Big( \frac{x}{\log x} \Big),
\end{equation}
where the $o$ constants depend on $q$.
Let $\Phi(t)$ be a smooth function supported on $[1/2, 5/2]$ and 
$\hat{\Phi}(0)=\int_0^{\infty}\Phi(t)dt$. In this set-up, we have the following 
Lemma.

\begin{lem}\label{Mainterm}
Let $\psi$ be a primitive, non-quadratic even Dirichlet character with
odd conductor $q >1$,
 $u \leq X$ be an integer with $(u,2q)=1$ and
 $h$ be a multiplicative function defined by
$$
h(p^k)=1+\frac{1}{p}+\frac{1}{p^2}-\frac{d_{\psi}(p)^2}{p(p+1)}
$$
for primes $p$ and integers $k \geq 1$.
Write $u=u_1u_2^2$, where $u_1$ is square-free.
Then for any $\epsilon>0$, we have 
\begin{align}\label{nt}
&
\sideset{}{'}\sum_{d \atop (d,2q)=1} 
\cchi(u)|L(1/2,\poc)|^2\Phi\Big(\frac{d}{X}\Big) \\
~= &
~A_{\psi}\hat{\Phi}(0)\frac{\dsi(u_1)}{h(u)}
\frac{\sqrt{u_1}}{\sigma(u_1)}X   \Big(\log \frac{D_1X}{u_1} 
~+~
\sum_{p|u_1}\frac{D_2(p)}{p}\log p
~+~
 \sum_{p|u \atop{p\nmid u_1}}\frac{D_3(p)}{p}\log p \Big) \nonumber \\
&+~
O\Big(u^{1/2+\epsilon}X^{3/4+\epsilon} +X^{7/8 + \epsilon} \Big), \nonumber
\end{align}
where $\sigma(n)$ denotes the sum of positive divisors of $n$ and
$$
A_{\psi}
~=~
\frac{\phi(2q)}{2q\zeta(2)}|L(1,\psi^2)|^2
\Big|1-\frac{\psi(2)^2}{2}\Big|^2
\prod_{p\nmid 2q}\Big(1-\frac{1}{p}\Big)h(p)
\prod_{p|2q}\Big(1+\frac{1}{p}\Big)^{-1}.
$$ 
Here $D_1$ is a positive constant which depends on $\Phi, \psi$ and 
$D_2(p), D_3(p)$ are $\ll 1$.
\end{lem}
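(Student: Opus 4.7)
The plan is to substitute the approximate functional equation \eqref{approx} into the left-hand side of \eqref{nt} and interchange summations, reducing the task to evaluating
$$
2\sum_{n=1}^\infty \frac{\dsi(n)}{\sqrt n}\,\sideset{}{'}\sum_{(d,2q)=1}\cchi(un)\,\Phi(d/X)\,V\!\left(\frac{\pi n}{8dq}\right).
$$
Using \eqref{Vx} to rewrite $V$ as a Mellin integral separates the $d$ and $n$ variables, so after swapping orders it suffices to understand, for $s$ on a suitable vertical line,
$$
C_s(m) := \sideset{}{'}\sum_{(d,2q)=1}\cchi(m)\,\Phi(d/X)\,d^s,\qquad m=un.
$$
A smoothed analogue of \lemref{charsum} (expand $\mu^2(d)=\sum_{a^2|d}\mu(a)$ and use P\'olya--Vinogradov on the non-square part) gives
$$
C_s(m) = \mathbf 1_{m=\square}\cdot\frac{X^{s+1}\widehat\Phi_s}{\zeta(2s+2)}\prod_{p|2qm}f_s(p) \;+\; O\!\bigl(m^{1/4+\epsilon}X^{1/2+\epsilon}\bigr),
$$
where $\widehat\Phi_s=\int_0^\infty t^s\Phi(t)\,dt$ and $f_s(p)$ is the explicit local factor coming from removing $\gcd(b,2qm)>1$.

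Writing $u=u_1u_2^2$ with $u_1$ squarefree, the product $un$ is a square iff $n=u_1\ell^2$. Substituting and reassembling the Mellin integral reduces the main contribution to
$$
\frac{2X}{\sqrt{u_1}}\cdot\frac{1}{2\pi i}\int_{(c)}g(s)\Big(\frac{\pi u_1}{8q}\Big)^{-s}X^s\,\widehat\Phi_s\,\mathcal D_\psi(s;u)\,\frac{ds}{s},
$$
with $g(s)=(\Gamma(s/2+1/4)/\Gamma(1/4))^2$ and
$$
\mathcal D_\psi(s;u) := \frac{1}{\zeta(2s+2)}\sum_{\ell\geq 1}\frac{\dsi(u_1\ell^2)}{\ell^{1+2s}}\prod_{p|2qu_1\ell}f_s(p).
$$
A standard Euler-product computation (built on $\sum_\ell\dsi(\ell^2)\ell^{-w} = \zeta(w)L(w,\psi^2)L(w,\bar\psi^2)/\zeta(2w)$, with local modifications at primes dividing $u$ and at $p=2$) yields
$$
\mathcal D_\psi(s;u) = \dsi(u_1)\,L(1+2s,\psi^2)\,L(1+2s,\bar\psi^2)\,\zeta(1+2s)\,H(s;u),
$$
where $H(s;u)$ is a finite Euler product over $p|2qu$, holomorphic and nonzero near $s=0$. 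Since $\psi$ is non-quadratic, $\psi^2$ is non-principal and the $L$-factors are analytic at $s=0$, so $\zeta(1+2s)/s$ is the only pole of the integrand there --- a double pole.

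Shifting the contour past $s=0$ (permissible by the rapid decay of $g(s)$) and collecting the residue yields the main term of \eqref{nt}. The $\log(D_1X/u_1)$ arises from pairing $X^s(\pi u_1/8q)^{-s}=1+s\log(X\cdot\text{const}/u_1)+\cdots$ with the $1/s^2$-part of the Laurent expansion of $\zeta(1+2s)/s$; the corrections $\sum_{p|u_1}D_2(p)\log p/p$ and $\sum_{p|u,\,p\nmid u_1}D_3(p)\log p/p$ come from the logarithmic derivative of $H(s;u)$ at $s=0$; and $A_\psi$ collects $|L(1,\psi^2)|^2$, the local factor $|1-\psi(2)^2/2|^2$ at $p=2$, and $H(0;1)$. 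The error term in \eqref{nt} is obtained by summing the $O(m^{1/4+\epsilon}X^{1/2+\epsilon})$ bound from $C_s(m)$ against $\dsi(n)/\sqrt n$ over $n \ll X^{1+\epsilon}$ (permissible by the decay $V(x)\ll e^{-x/2}$), giving the stated $u^{1/2+\epsilon}X^{3/4+\epsilon}+X^{7/8+\epsilon}$.

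\textbf{The main obstacle} will be the precise Euler-product bookkeeping of $H(s;u)$: at each prime $p\mid 2qu$ one must explicitly write down the local factor obtained from the $\ell$-sum, factor out the principal piece $L(1+2s,\psi^2)L(1+2s,\bar\psi^2)\zeta(1+2s)$, and expand to first order at $s=0$ so as to match the claimed constants. In particular, primes $p\mid u_1$ (appearing with odd multiplicity in $u$, hence forcing odd $p$-adic valuation of $\ell$ in $\dsi(u_1\ell^2)$) and primes $p\mid u_2$ with $p\nmid u_1$ (even multiplicity, hence entering only through $\ell$) enter the local factor in genuinely different ways, which is exactly what produces the two separate correction sums $\sum_{p|u_1}D_2(p)\log p/p$ and $\sum_{p|u,\,p\nmid u_1}D_3(p)\log p/p$ with $O(1)$ coefficients.
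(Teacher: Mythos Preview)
Your main-term computation is sound and is essentially the content of Proposition~1.4 of \cite{KST}, which the paper simply cites. The gap is in the error term. If you evaluate $C_s(un)$ over \emph{squarefree} $d$ via $\mu^2(d)=\sum_{a^2\mid d}\mu(a)$ and P\'olya--Vinogradov, the non-square contribution is only $O((un)^{1/4+\epsilon}X^{1/2+\epsilon})$, and summing this against $|\dsi(n)|n^{-1/2}$ over $n\ll X^{1+\epsilon}$ (the effective range forced by $V(x)\ll e^{-x/2}$) gives
\[
u^{1/4+\epsilon}X^{1/2+\epsilon}\sum_{n\ll X}n^{-1/4+\epsilon}\ \ll\ u^{1/4+\epsilon}X^{5/4+\epsilon},
\]
not the claimed $u^{1/2+\epsilon}X^{3/4+\epsilon}+X^{7/8+\epsilon}$. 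The smoothing in $\Phi$ does not help here: after writing $d=a^2b$, the inner sum over $b$ runs to length $\asymp X/a^2$, while the conductor of $\chi_0(\cdot)\big(\tfrac{\cdot}{un}\big)$ can be as large as $qun\asymp X^2$, so Poisson summation in $b$ gives nothing beyond P\'olya--Vinogradov.

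The paper avoids this by \emph{not} keeping the squarefree condition throughout. It splits $\mu^2(d)=\sum_{v^2\mid d,\,v\le X^{1/8}}\mu(v)+\sum_{v^2\mid d,\,v> X^{1/8}}\mu(v)$. For $v\le X^{1/8}$ one writes $d=v^2e$ with $e$ unrestricted, and the resulting sum over \emph{all} $e$ is exactly what Proposition~1.4 of \cite{KST} evaluates (via Poisson summation in $e$), producing the main term together with the error $O(u^{1/2+\epsilon}X^{3/4+\epsilon}+X^{7/8}\log X)$. The tail $v>X^{1/8}$ cannot be handled by character-sum estimates at all; instead the paper bounds it by Cauchy--Schwarz and the fourth-moment estimate of \propref{moment}, obtaining $O(X^{7/8+\epsilon})$. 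Your sketch misses both of these ingredients --- the removal of the squarefree condition before applying Poisson, and the separate moment-based treatment of the large-$v$ tail --- and without them the stated error is unattainable.
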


\begin{proof}
Note that 
$$
\mu^2(d)
~=~
 \sum_{ v^2 |d}\mu(v)
~=~ 
\sum_{v^2|d \atop{v \le X^{1/8}}}\mu(v)
~+~ 
\sum_{v^2|d \atop{v > X^{1/8}}}\mu(v).
$$
Therefore, we can split the sum in \eqref{nt} as
\begin{align}\label{split}
\sideset{}{'}\sum_{d \atop (d,2q)=1} 
\cchi(u)|L(1/2,\poc)|^2\Phi\Big(\frac{d}{X}\Big)
&=
\sum_{d \atop (d,2q)=1} 
\sum_{v^2|d \atop{v \le X^{1/8}}}\mu( v)
\cchi(u)|L(1/2,\poc)|^2\Phi\Big(\frac{d}{X}\Big)\nonumber \\
&+
\sum_{d \atop (d,2q)=1}
\sum_{v^2|d \atop{v > X^{1/8}}}\mu(v)
\cchi(u)|L(1/2, \poc)|^2\Phi\Big(\frac{d}{X}\Big). 
\end{align}
Using Proposition 1.4 of \cite{KST}, we see that the first term on the right 
hand side of \eqref{split} is
\begin{align*}
&
\sum_{d \atop (d, 2q)=1}\sum_{v^2|d \atop{v \leq X^{1/8}}}\mu(v)
\cchi(u)|L(1/2, \poc)|^2\Phi\Big(\frac{d}{X}\Big) \\
&=
~A_{\psi}\hat{\Phi}(0)\frac{\dsi(u_1)}{h(u)\sqrt{u_1}}\frac{u_1}{\sigma(u_1)}X 
 \Big(\log \frac{D_1X}{u_1} 
~+~
\sum_{p|u_1}\frac{D_2(p)}{p}\log p
~+~ 
\sum_{p|u \atop{p\nmid u_1}}\frac{D_3(p)}{p}\log p \Big)
\\
&+
O\Big(X^{7/8}\log X
~+~
 u^{1/2+\epsilon}X^{3/4+\epsilon} \Big),
\end{align*}
where $h, A_{\psi}$, $D_1$, $D_2(p)$ and $D_3(p)$ are before.

Now we shall show that 
\begin{equation*}
\sum_{d \atop (d, 2q)=1}\sum_{v^2|d \atop{v >  X^{1/8}}}\mu(v)
\cchi(u) |L(1/2, \poc)|^2\Phi\Big(\frac{d}{X}\Big)
~=~
O(X^{7/8+\epsilon}).
\end{equation*}
Note that there is a one-to-one correspondence between the sets
$$
\Big\{ X/2 < d \le  5X/2  ~\big\vert~  (d, 2q) =1, ~v^2 |d  \implies v > X^{1/8} \Big\} 
$$
and 
$$
\Big\{ X/2 < v^2 m \le 5X/2  ~\big\vert~  m \text{ square-free  }, ~(v m, 2q) =1,  ~v > X^{1/8} \Big\}. 
$$
Since
$\displaystyle \sum_{v^2|d \atop{v > X^{1/8}}}\mu(v)  \ll d^{\epsilon}$,  
we get
\begin{align}\label{last}
&
\sum_{d \atop (d, 2q)=1}\sum_{v^2|d \atop{v >  X^{1/8}}}\mu(v)
\cchi(u) |L(1/2, \poc)|^2 \Phi\Big(\frac{d}{X}\Big) \nonumber \\
&\ll 
\sum_{d \atop {(d, 2q)=1 \atop{v^2|d \Rightarrow v >  X^{1/8}}}}
d^{\epsilon} |L(1/2, \poc)|^2\Phi\Big(\frac{d}{X}\Big)\nonumber\\
&\ll
X^{\epsilon} 
\sum_{ X^{1/8}< v \leq \sqrt{5X/2} \atop{(v, 2q)=1}}
\sideset{}{'}\sum_{\frac{X}{2v^2}\leq m \leq \frac{5X}{2v^2} \atop (m, 2q)=1}
|L(1/2, \psi \otimes \chi_{8v^2m})|^2.
\end{align}
Now for $\Re(s)>1$, we have 
\begin{equation}\label{prod}
L(s, \psi \otimes \chi_{8v^2m})
~=~ 
L(s, \psi \otimes \chi_{8m})\prod_{p| v}
\Big(1-\frac{\psi(p)\chi_{8m}(p)}{p^s}\Big).
\end{equation}
Since $\psi$ is a non-quadratic character, both $\psi \otimes \chi_{8v^2m}$ and 
$\psi \otimes \chi_{8m}$ are non-principal characters and hence 
$L(s, \psi \otimes \chi_{8v^2m})$ and $L(s, \psi \otimes \chi_{8m})$ 
are entire functions. Therefore by identity theorem \eqref{prod} holds on $\C$.
Applying Cauchy-Schwartz inequality to the inner sum in \eqref{last} and 
using \propref{moment}, we have 
\begin{align*}
\sideset{}{'}\sum_{\frac{X}{2v^2}\leq m \leq \frac{5X}{2v^2} \atop (m, 2q)=1}|L(1/2, \psi \otimes \chi_{8v^2m})|^2
&\ll~
\frac{\sqrt{X}}{v}
\Big(v^{\epsilon}\sideset{}{'}\sum_{\frac{X}{2v^2}\leq m \leq \frac{5X}{2v^2} \atop (m, 2q)=1} 
|L(1/2, \psi \otimes \chi_{8m})|^4 \Big)^{1/2} 
~\ll~
\frac{X^{1+\epsilon}}{v^2}.
\end{align*}
Finally taking sum over $v$, we get
\begin{align}\label{last1}
\sum_{d \atop (d, 2q)=1}\sum_{v^2|d \atop{v >  X^{1/8}}}\mu(v)
\Big(\frac{8d}{u}\Big)|L(1/2, \poc)|^2\Phi\Big(\frac{d}{X}\Big)
&\ll 
X^{1+\epsilon}
\sum_{ X^{1/8}< v \leq \sqrt{5X/2} \atop{(v, 2q)=1}}
\frac{1}{v^2}
~\ll~ 
X^{7/8+\epsilon}.
\end{align}
This completes the proof of \lemref{Mainterm}.
\end{proof}
We now use \lemref{Mainterm} to complete the proof of \propref{M2}.
\begin{proof}[Proof of \propref{M2}]
We have 
\begin{eqnarray}\label{nt1}
&&
\sideset{}{'}\sum_{d \atop (d,2q)=1} |L(1/2,\poc)|^2 |R(d)|^2
\Phi\Big(\frac{d}{X}\Big) \nonumber \\
&=&
\sum_{m,n \leq N}r(m)r(n)\psi_0(m)\overline{\psi_0}(n)
\sideset{}{'}\sum_{d \atop (d,2q)=1} \cchi(mn)|L(1/2,\poc)|^2
\Phi\Big(\frac{d}{X}\Big).
\end{eqnarray}
Using \lemref{Mainterm}, we see that the contribution of the error term
from \eqref{nt} is
\begin{align*}
&\ll 
\sum_{m,n \leq N} (X^{7/8+\epsilon}
+ (mn)^{1/2+\epsilon}X^{3/4+\epsilon})
~\ll~
N^2X^{7/8+\epsilon} + N^{3+\epsilon}X^{3/4+\epsilon}
~\ll~
X^{23/24+\epsilon}.
\end{align*}
Applying \eqref{nt} of \lemref{Mainterm} and taking sum over all $m, n$, 
we see that the main term of \eqref{nt1} is
\begin{align*}
&
A_{\psi}\hat{\Phi}(0)X\log X
\sum_{m,n} r(m) r(n)\psi_0(m)\overline{\psi_0}(n)
~\frac{\dsi\big(\frac{mn}{(m,n)^2}\big)}{h(mn)}
\frac{\sqrt{\frac{mn}{(m,n)^2}}}{\sigma\big(\frac{mn}{(m,n)^2}\big)}\\
&=~
A_{\psi}\hat{\Phi}(0)X\log X
\prod_p\Big(1~+~ \frac{r(p)^2}{h(p^2)}\Big)
\Big(1 ~+~ \frac{r(p)}{h(p)}d_{\psi_0}(p)\dsi(p)\frac{\sqrt{p}}{p+1}\Big).
\end{align*}
Now we show that the contribution from the terms with $\max\{m, n \} >N$ 
is negligible. Let $\alpha=\frac{1}{8\log L}$. Then we have
\begin{align*}
&
\sum_{\max \{m, n\} > N} r(m) r(n) \psi_0(m)\overline{\psi_0}(n)
~\frac{\dsi\big(\frac{mn}{(m,n)^2}\big)}{h(mn)}
\frac{\sqrt{\frac{mn}{(m,n)^2}}}{\sigma\big(\frac{mn}{(m,n)^2} 
\big)} \\
&\ll~
\sum_{\max \{ m, n \}>N} r(m) r(n) \Big|\dsi\Big(\frac{mn}{(m, n)^2}\Big)\Big|
~\frac{\sqrt{\frac{mn}{(m,n)^2}}}{\sigma\big(\frac{mn}{(m, n)^2}
\big)}\Big(\frac{mn}{N}\Big)^{\alpha} \\
&\ll~
\exp\Big(-\frac{\log X}{400\log \log X}\Big).
\end{align*}
Finally, the contribution of the remaining terms of \eqref{nt} in the sum
\eqref{nt1}  is
\begin{eqnarray*}
&\ll&
\sum_{m,n \leq N} \frac{r(m) r(n)}{h(mn)}
~\frac{\Big\vert \dsi\big(\frac{mn}{(m,n)^2}\big) \Big\vert}{\sqrt{\frac{mn}{(m,n)^2}}}
\Big(\log D_1 
~+~
\sum_{p|\frac{mn}{(m,n)^2}}\Big(\frac{1}{p} + 1 \Big)\log p
~+~ 
\sum_{p|(m,n)}\frac{\log p}{p}  \Big)\\
&\ll&
\sum_{d=1}^{\infty} \sum_{m,n=1 \atop{(m,n)=1}}^{\infty}  
\frac{r(d)^2r(m) r(n)}{\sqrt{mn}}
~\frac{\vert\dsi(mn)\vert}{h(d^2)h(mn)}
\Big(\log D_1 
~+~\sum_{p| mn} \Big(\frac{1}{p} + 1 \Big)\log p
~+~ \sum_{p|d}\frac{\log p}{p} \Big)\\
&\ll&
\prod_p\Big(1+ \frac{r(p)^2}{h(p^2)}\Big)
\Big(1~+~\frac{4r(p)}{h(p)\sqrt{p}}\Big)
\Big( \log D_1 
~+~  \sum_{\ell}
 \frac{r(\ell)}{\sqrt{\ell}} \log \ell 
~+~
\sum_{\ell} r(\ell)^2 \frac{\log \ell}{\ell}
\Big) \\
&\ll&
L ~ \pp\Big(1+\frac{r(p)^2}{h(p^2)}\Big)
\Big(1~+~\frac{4r(p)}{h(p)\sqrt{p}}\Big).
\end{eqnarray*}
Therefore, we get
\begin{eqnarray}\label{part1}
&&
\sideset{}{'}\sum_{d \atop (d,2q)=1} |L(1/2,\poc)|^2 |R(d)|^2
\Phi\Big(\frac{d}{X}\Big)\nonumber \\
&=&
A_{\psi}\hat{\Phi}(0)X\log X
\pp\Big(1+\frac{r(p)^2}{h(p^2)}\Big)
\Big(1+\frac{r(p)}{h(p)}d_{\psi_0}(p)\dsi(p)
\frac{\sqrt{p}}{p+1}\Big) \nonumber \\
&&+~
O \Big(X \sqrt{\log X \log\log X}\pp\Big(1+\frac{r(p)^2}{h(p^2)}\Big)
\Big(1+\frac{4r(p)}{ \sqrt{p} h(p) } \Big) \Big).
\end{eqnarray}
Using the fact that $\frac{1}{h(p^k)}=1+O(\frac{1}{p})$ for $k \geq 1$, we get
\begin{align*}
\Big(1+\frac{r(p)^2}{h(p^2)}\Big)
\Big(1+\frac{r(p)}{h(p)}d_{\psi_0}(p)\dsi(p)
\frac{\sqrt{p}}{p+1}\Big)
&=
(1+r(p)^2)\Big( 1+ \frac{r(p)}{\sqrt{p}}d_{\psi_0}(p)\dsi(p)
+ O\Big(\frac{r(p)^3}{\sqrt{p}}\Big)\Big)\\
&=
(1+r(p)^2)
\exp\Big( \frac{r(p)}{\sqrt{p}}d_{\psi_0}(p)\dsi(p) 
+  O\Big(\frac{r(p)^3}{\sqrt{p}}\Big) \Big).
\end{align*}
Therefore the product in the main term of \eqref{part1} becomes 
\begin{align*}
&
\prod_p (1+r(p)^2)
\exp\Big( \sum_p \Big(\frac{r(p)}{\sqrt{p}}d_{\psi_0}(p)\dsi(p) 
+ O\Big(\frac{r(p)^3}{\sqrt{p}}\Big) \Big) \Big)\\
&=
\prod_p (1+r(p)^2)
\exp\Big( \sum_p \frac{r(p)}{\sqrt{p}}d_{\psi_0}(p)\dsi(p) 
+ O\Big(\frac{L}{\log^3L}\Big) \Big).
\end{align*}
Similarly, the product in the error term of \eqref{part1} becomes
$$
\prod_p (1+r(p)^2)
\exp\Big( \sum_p \frac{4r(p)}{\sqrt{p}}
+ O\Big(\frac{L}{\log^3L}\Big) \Big)
$$
When $\psi=\psi_0$, we choose $\Phi(t)$ to be a smooth function 
supported on $[1,2]$ with $ 0 \leq \Phi(t) \leq 1$ for all $t$ and
$\Phi(t)=1$ on $[1.1 , 1.9]$. Then we have
\begin{eqnarray*}
&& \DD |L(1/2,\psi_0 \otimes \cchi)|^2|R(d)|^2 \\
& \geq &
\sideset{}{'}\sum_{d \atop (d,2q)=1} |L(1/2,\poc)|^2|R(d)|^2
\Phi\Big(\frac{d}{X}\Big)\\
& \geq &
\frac45 A_{\psi}X\log X \pp (1+r(p)^2)
\exp\Big(\sum_{L^2 \leq  p \leq L^4} \frac{r(p)}{\sqrt{p}}d_{\psi_0}(p)\dsi(p) 
~+~ O\Big(\frac{L}{\log^3L}\Big) \Big)\\
&&+~
O \Big(X\sqrt{\log X \log\log X}\pp (1+r(p)^2)
\exp\Big( \sum_{L^2 \leq  p \leq L^4} \frac{4r(p)}{\sqrt{p}}
~+~ O\Big(\frac{L}{\log^3L}\Big) \Big)\Big).
\end{eqnarray*}
Using \eqref{psisum}, we note that
$$
\sum_{L^2 \leq p \leq L^4} \frac{r(p)}{\sqrt{p}}d_{\psi_0}(p)^2 
~=~
\Big(\frac{1}{2}+o(1)\Big)\frac{L}{\log L}.
$$
This implies that
\begin{align*}
\DD |L(1/2,\psi_0 \otimes \cchi)|^2|R(d)|^2 
&\gg
X\log X \pp (1+r(p)^2)\exp\Big(\Big(\frac{1}{2}+o(1)\Big)\frac{L}{\log L}\Big)\\
&\gg
X\log X \pp (1+r(p)^2)\exp\Big(\Big(\frac{1}{8\sqrt{24}}+o(1)\Big)
\sqrt{\frac{\log X}{\log\log X}}\Big).
\end{align*}
When $\psi \neq \psi_0$, we choose $\Phi(t)$ to be a smooth function 
supported on $[1/2,5/2]$ with $ 0 \leq \Phi(t) \leq 1$ for all $t$ and
$\Phi(t)=1$ on $[1 , 2]$.
Then
\begin{eqnarray*}
&& \DD |L(1/2,\poc)|^2|R(d)|^2 \\
& \leq &
\sideset{}{'}\sum_{d \atop (d,2q)=1} |L(1/2,\poc)|^2|R(d)|^2
\Phi\Big(\frac{d}{X}\Big)\\
&\leq &
2 A_{\psi}X\log X \pp (1+r(p)^2)
\exp\Big(\sum_{L^2 \leq p \leq L^4} \frac{r(p)}{\sqrt{p}}d_{\psi_0}(p)\dsi(p) 
~+~ O\Big(\frac{L}{\log^3L}\Big) \Big)\\
&& ~+~
O \Big(X\sqrt{\log X \log\log X}\pp (1+r(p)^2)
\exp\Big( \sum_{L^2 \leq  p \leq L^4} \frac{4r(p)}{\sqrt{p}}
~+~ O\Big(\frac{L}{\log^3L}\Big) \Big)\Big).
\end{eqnarray*}
When $\psi \neq \psi_0$, applying \eqref{psisum}, we get 
$$
\sum_p \frac{r(p)}{\sqrt{p}}d_{\psi_0}(p)\dsi(p) 
=
o\Big(\frac{L}{\log L}\Big).
$$
This implies for $\psi \neq \psi_0$, we have
\begin{align*}
\DD |L(1/2,\poc)|^2|R(d)|^2 
\ll
X\log X \pp (1+r(p)^2)\exp\Big(o\Big(\frac{L}{\log L}\Big)\Big).
\end{align*}
This completes the proof of \propref{M2}.
\end{proof}

\smallskip

\section{Proof of \thmref{mainthm}}
Since $F$ is non-zero, $c_{\psi_0} \neq 0$ for some $\psi_0$. Without
loss of generality, we can assume that $c_{\psi_0}=1$. 
Applying triangle inequality and Cauchy-Schwartz inequality, we have
\begin{align*}
2|L(1/2, ~F \otimes \cchi)|^2
&~= ~~
2\Big| \sideset{}{^*}\sum_{~\psi\!\!\!\!\!\pmod{q}} c_{\psi} 
L(1/2,~\psi \otimes \cchi)\Big|^2\\
&~\geq ~~
 |L(1/2, ~\psi_0 \otimes \cchi)|^2 
~-~ 
A~\Big(\sideset{}{^*}\sum_{~\psi\!\!\!\!\!\pmod{q} \atop{\psi \neq \psi_0}}
 |L(1/2, ~\psi \otimes \cchi)| ^2\Big)
\end{align*}
for some positive constant $A>0$.
Let $\mathcal{S}$ denote 
the set of square free integers $d$ co-prime to $2q$ with
$X< d \leq 2X$ such that 
$$
|L(1/2,\psi_0 \otimes \cchi)|^2
~>~
A \sideset{}{^*}\sum_{\psi \pmod{q} \atop{\psi \neq \psi_0}}
|L(1/2, \psi \otimes \cchi)| ^2
~+~ 
\exp\Big(\frac{1}{40}\sqrt{\frac{\log X}{\log\log X}}\Big).
$$
Therefore 
\begin{align}\label{fthm}
\DD |L(1/2,\psi_0 \otimes \cchi)|^2
&|R(d)|^2
~\leq~ 
\sum_{d \in \mathcal{S}}
|L(1/2,\psi_0 \otimes \cchi)|^2|R(d)|^2\\
+ &
\DD \left(A \sideset{}{^*}\sum_{\psi\!\!\!\!\!\pmod{q} \atop{\psi \neq \psi_0}}
|L(1/2, \psi \otimes \cchi)| ^2 
~+~ 
\exp\Big(\frac{1}{40}\sqrt{\frac{\log X}{\log\log X}}\Big)\right) 
|R(d)|^2.    \nonumber
\end{align} 
Using \propref{brd} and \propref{M2}, we see that the contribution of 
the second term on the right hand side of the above equation is negligible.
This is because for any $\psi \ne \psi_0$, we have
\begin{align*}
\DD |L(1/2,\poc)|^2|R(d)|^2 
&\ll~
X\log X \pp (1+r(p)^2)
\exp\Big(o\Big(\sqrt{\frac{\log X}{\log\log X}}\Big)\Big).
\end{align*}
Again applying \propref{M2}, we see that the lower bound of the left hand 
side of \eqref{fthm} is 
\begin{align}\label{lower}
X\log X \pp (1+r(p)^2)\exp\Big(\Big(\frac{1}{8\sqrt{24}}+o(1)\Big)
\sqrt{\frac{\log X}{\log\log X}}\Big) 
\ll~
 \sum_{d \in \mathcal{S}} |L(1/2,\psi_0 \otimes \cchi)|^2|R(d)|^2.
\end{align}
Using Cauchy-Schwartz inequality, H\"older's inequality, \propref{brd},
and \propref{moment}, for any $\epsilon>0$, we see that
\begin{align}\label{upper}
\sum_{d \in \mathcal{S}} |L(1/2,\psi_0 \otimes \cchi)|^2|R(d)|^2
&\ll~
\Big( \DD |L(1/2,\psi_0 \otimes \cchi)|^4 \Big)^{1/2} 
\Big( \DD |R(d)|^6 \Big)^{1/3}
|\mathcal{S}|^{1/6} \nonumber\\
&\ll~
X^{1/2+\epsilon} X^{1/3+\epsilon} |\mathcal{S}|^{1/6} 
~\ll~ 
X^{5/6 +\epsilon}|\mathcal{S}|^{1/6}.
\end{align}
Using \eqref{lower} and \eqref{upper},
we conclude that
$
|\mathcal{S}| \gg X^{1- \epsilon}.
$
This completes the proof of \thmref{mainthm}.
\qed

\bigskip
\noindent
{\bf Acknowledgments.}~ We would like to thank K. Soundararajan for
sharing his PhD thesis with us.

\smallskip


\begin{thebibliography}{xx}

\bibitem{HD}
H. Davenport, 
Multiplicative number theory,
Grad. Texts in Math. {\bf 74},
Third edition,
{\em Springer-Verlag, New York}, (2000).


\bibitem{GKS}
S. Gun, W. Kohnen and K. Soundararajan,
{\em Large Fourier coefficients of Half-Integer weight modular 
forms }, to appear in Amer J. Math.


\bibitem{AH}
A. Harper,
{\em Sharp conditional bounds for moments of the Riemann zeta function}, 
preprint, arXiv:1305.4618 (2013).


\bibitem{HB}
D. Heath-Brown,
{\em A mean value estimate for real character sums},
Acta Arith. {\bf 72} (1995), no.3, 235--275.


\bibitem{IK}
H. Iwaniec and E. Kowalski, 
Analytic Number Theory, 
American Math. Soc. Colloq. Publ. 53, 
American Mathematical Society, Providence, 2004. 


\bibitem{MJ}
M. Jutila, 
{\em On the mean value of $L(1/2,\chi)$ for real characters},
Analysis  {\bf 1} (1981), no.2, 149--161.

\bibitem{KeSn}
J. Keating and N. Snaith,
{\em Random matrix theory and $L$-functions at $s=1/2$},
Comm. Math. Phys. {\bf 214} (2000), no.1, 91--110.


\bibitem{MM}
M. R. Murty, 
Problems in analytic number theory,
Graduate Texts in Mathematics {\bf 206},
Second Edition, {\em Springer}, (2008).


\bibitem{RS}
Z. Rudnick and K. Soundararajan,
{\em Lower bounds for moments of L-functions: 
symplectic and orthogonal examples},
Multiple Dirichlet series, automorphic forms, 
and analytic number theory,  
Proc. Sympos. Pure Math. {\bf 75} (2006)
American Mathematical Society, Providence, RI, 293–303.


\bibitem{QS}
Q. Shen, 
{\em The fourth moment of quadratic Dirichlet $L$-functions},
Math. Z. {\bf 298}
(2021), no.1--2, 713--745.


\bibitem{KST}
K. Soundararajan,
{\em Quadratic twists of Dirichlet L-functions},
Ph.D. Thesis, Princeton University
ProQuest LLC, Ann Arbor, MI, (1998). 

\bibitem{KS00}
K. Soundararajan, 
{\em Nonvanishing of quadratic Dirichlet $L$-functions at $s = 1/2$},
Ann. of Math. (2) {\bf 152} (2000), no.2, 447--488.

\bibitem{KS}
K. Soundararajan,
{\em Extreme values of zeta and L-functions},
Math. Ann. {\bf 342} (2008), no. 2, 467--486.


\bibitem{KS09}
K. Soundararajan,
{\em Moments of the Riemann zeta function},
Ann. of Math. (2) {\bf 170} (2009), no.2, 981--993.


\end{thebibliography}
\end{document}